\numberwithin{equation}{section}
\theoremstyle{plain}
\newtheorem{thm}{Theorem}[section]
\newtheorem{prop}[thm]{Proposition}
\newtheorem{lem}[thm]{Lemma}
\newtheorem{conj}[thm]{Conjecture}
\newtheorem{defn}[thm]{Definition}
\newtheorem*{cor*}{Corollary}
\newtheorem*{prop*}{Proposition}
\newtheorem*{thm*}{Theorem}
\theoremstyle{remark}
\newtheorem{rmk}[equation]{Remark}
\DeclareMathOperator{\GL}{GL}
\newcommand{\Lfunction}{\hbox{$L$-function}}
\newcommand{\A}{\mathbb A}
\newcommand{\C}{\mathbb C}
\newcommand{\Q}{\mathbb Q}
\newcommand{\R}{\mathbb R}
\newcommand{\Z}{\mathbb Z}
\renewcommand{\Re}{\mathrm{Re}}
\def\term#1{\textbf{\textit{#1}}}
\begin{document}

\title[Analytic $L$-functions]{Analytic $L$-functions: Definitions, Theorems, and Connections}
\author[Farmer]{David W. Farmer}
\address{American Institute of Mathematics, San Jose, CA USA}
\author[Pitale]{Ameya Pitale}
\address{University of Oklahoma, Norman, OK USA}
\author[Ryan]{Nathan C. Ryan}
\address{Bucknell University, Lewisburg, PA USA}
\author[Schmidt]{Ralf Schmidt}
\address{University of Oklahoma, Norman, OK USA}

\date{\today}

\begin{abstract}
$L$-functions can be viewed axiomatically, such as in the formulation due to Selberg,
or they can be seen as arising from cuspidal automorphic representations of $\GL(n)$,
as first described by Langlands.
Conjecturally these two descriptions of $L$-functions are
the same, but it is not even clear that these are describing the same
set of objects.  We propose a collection of axioms that bridges the
gap between the very general analytic axioms due to
Selberg and the very particular and algebraic construction
due to Langlands.  Along the way we prove theorems about $L$-functions
that satisfy our axioms and state conjectures that arise naturally
from our axioms.
\end{abstract}

\maketitle

\tableofcontents

\section{Introduction}

Different mathematicians mean different things when they say
``$L$-function.''  Some mean an element of the Selberg class and others
might mean a Dirichlet series with Euler product and require that it
be associated
to an automorphic form.  For some people an $L$-function has to be
entire, for others it can have poles on the edge of the critical
strip, for yet 
others it can even have poles in other locations.

In this paper we show how one can attach adjectives to $L$-functions (and which
adjectives one should attach, as determined by one's goals)
in such a way that the resulting classes
of $L$-functions provide a detailed framework to understanding $L$-functions.
This framework can be used to clarify
the distinctions between various classes, and also to unify by
showing connections between them.  In
Section~\ref{sec:types}, we define the following sets of $L$-functions
(see that Section for definitions):
\begin{itemize}
\item Tempered balanced analytic primitive entire $L$-functions.  These $L$-functions are
  defined axiomatically, with precise restrictions on their
  functional equation and Euler product.
\item $\Q$-automorphic $L$-functions.  These $L$-functions are
  associated to tempered balanced unitary cuspidal automorphic representations of $\GL(n,\A_\Q)$.
\end{itemize}
The above sets are believed to contain all primitive $L$-functions that
are expected to satisfy analogues of the Riemann Hypothesis,
and conjecturally the two sets are essentially equal.

Within each of the above sets are distinguished subsets which,
conjecturally, contain all $L$-functions arising from arithmetic
objects.
\begin{itemize}
\item  $L$-functions of algebraic type (analytic and $\Q$-automorphic).  These are characterized
by conditions on the $\Gamma$-factors in the functional equation.
\item  $L$-functions of arithmetic type (analytic and $\Q$-automorphic).  These are characterized
by conditions on the coefficients in the Dirichlet series.
\end{itemize}
Conjecturally all four of these sets of $L$-functions are equal and arise from the following arithmetic objects:
\begin{itemize}
\item Pure motives.
\item Geometric Galois representations.
\end{itemize}
Associated to each such arithmetic object is an $L$-function.  Conjecturally
those sets of $L$-functions are equal, and coincide with the
four subsets of $L$-functions mentioned previously.
The conjectured relationships between these sets of $L$-functions is
shown in Figure~\ref{fig:diagram1}.

In Figure~\ref{fig:diagram2} in Section~\ref{sec:connections} we discuss in more detail the conjectured relationships between the 
sets of $L$-functions described above.  Precise descriptions of
each of these sets are given in Sections~\ref{sec:types} and~\ref{sec:aa}.

\usetikzlibrary{arrows,
                calc,chains,
                decorations.pathreplacing,decorations.pathmorphing,
                fit,
                positioning}

\begin{figure}

\tikzset{
  line/.style={draw, thick},
        }

\begin{tikzpicture}[node distance = 4mm, start chain = A going below, font = \sffamily, > = stealth', PC/.style = {
        rectangle, rounded corners,
        draw=black, very thick,
        text width=8em,
        minimum height=1.5em,
        align=center,
        on chain}, 
blank/.style={
        minimum height=1.5em,
        align=center, on chain},
                    ]

\node[PC]   {Tempered $\mathbb{Q}$-automorphic \mbox{$L$-functions} of algebraic type}; 
\node[PC]   {Primitive analytic $L$-functions of algebraic type};
\node[blank,right=of A-1,xshift=-.375cm] {$\subsetneq$};
\node[blank,right=of A-2,xshift=-.375cm] {$\subsetneq$};
\node[PC,right=of A-3,xshift=-.375cm] {Tempered $\mathbb{Q}$-automorphic $L$-functions};
\node[PC,right=of A-4,xshift=-.375cm] {Primitive analytic $L$-functions};

\node[PC,left=of A-1,yshift=10mm] {$L$-functions of pure irreducible motives};

\node[PC,left=of A-1,yshift=-12mm] {$L$-functions of geometric Galois representations};

\draw[shorten <=1mm,shorten >=1mm] (A-2.north)  edge[double distance=2pt]  (A-1.south);

\draw[shorten <=1mm,shorten >=1mm] (A-6.north)  edge[double distance=2pt]  (A-5.south); 

\draw[shorten <=1mm,shorten >=1mm] (A-1.west)  edge[double distance=2pt]  (A-7.south east); 

\draw[shorten <=1mm,shorten >=1mm] (A-1.west)  edge[double distance=2pt]  (A-8.north east);

\draw[shorten <=1mm,shorten >=1mm] (A-8.north)  edge[double distance=2pt]  (A-7.south); 

\end{tikzpicture}
\caption{Conjectured relationships between the sets of $L$-functions considered in this paper.}\label{fig:diagram1}
\end{figure}
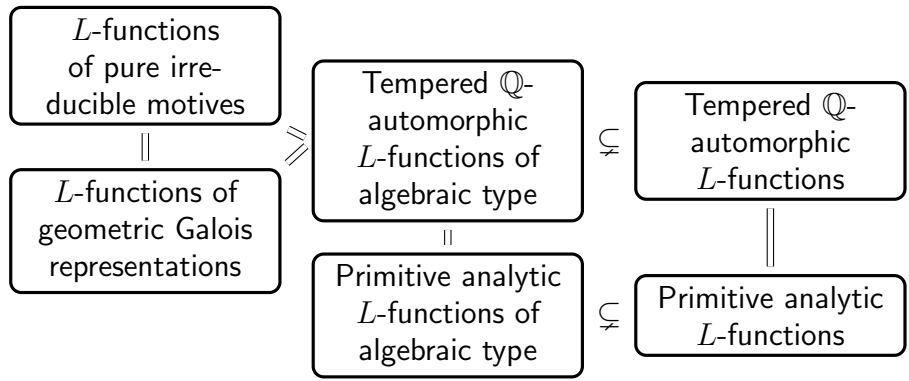

\section{Two views of \texorpdfstring{$L$}{}-functions}\label{sec:types}

\subsection{Analytic \texorpdfstring{$L$}{}-functions}\label{sec:axioms}
\label{sec:defn_analytic}
The first set of $L$-functions in our discussion is defined
axiomatically.  

Throughout the axioms, $s=\sigma+ it$ is a complex variable with $\sigma$
and $t$ real.

A \term{tempered balanced analytic $L$-function} is a function $L(s)$ which satisfies the five axioms below. In this paper we will also refer to `tempered analytic $L$-functions' and
`analytic $L$-functions,' which are obtained by relaxing some of these axioms.
See Section~\ref{sec:axioms_comments} for a discussion.

\begin{description}
\setlength{\itemindent}{-\leftmargin}
\setlength{\listparindent}{\parindent}
\item[Axiom $1$ (Analytic properties)]
$L(s)$ is given by a Dirichlet series
\index{Dirichlet series}
\[
\label{eqn:DS}\tag{Ax1.1}
L(s)=\sum_{n=1}^\infty \frac{a_n}{n^s},
\]
where $a_n\in \C$.

{\renewcommand{\labelenumi}{\alph{enumi})}
\begin{enumerate}

\smallskip

\item \emph{Convergence:} $L(s)$ converges absolutely for $\sigma>1$.

\smallskip

\item \emph{Analytic continuation:} $L(s)$ continues
to a meromorphic function having only finitely many poles,
with all poles 
lying on the $\sigma=1$ line.
\end{enumerate}
}

\medskip

\item[Axiom $2$ (Functional equation)]
\index{functional equation!axioms}
There is a positive integer $N$ \linebreak called the \term{conductor} of the \Lfunction,
\index{conductor}
a positive integer $d$ called the \term{degree} of the \Lfunction,
\index{degree}
a pair of non-negative integers $(d_1, d_2)$ called the \term{signature} of the \Lfunction,
\index{signature}
where $d=d_1+2d_2$,
and complex numbers $\{\mu_j\}_{j=1}^{d_1}$ and $\{\nu_k\}_{k=1}^{d_2}$
called the \term{spectral parameters} of the \Lfunction,
\index{spectral parameters}
such that the \term{completed \Lfunction} 
\index{completed \Lfunction}\index{$L$-function@\Lfunction!completed}
\[
\label{eqn:Lambda}\tag{Ax2.1}
\Lambda(s) =\mathstrut  N^{s/2} 
\prod_{j=1}^{d_1} \Gamma_\R(s+ \mu_j)
\prod_{k=1}^{d_2} \Gamma_\C(s+ \nu_k)
\cdot L(s)
\]
has  the following properties:

{\renewcommand{\labelenumi}{\alph{enumi})}
\begin{enumerate}

\smallskip

\item \emph{Bounded in vertical strips:}
Away from the poles of the \Lfunction,
$\Lambda(s)$ is bounded in vertical strips $\sigma_1 \leq \sigma \leq \sigma_2$.

\smallskip

\item \emph{Functional equation:} There exists $\varepsilon\in \C$,
\index{functional equation!general}
called the \term{sign} of the functional equation, such that
\index{sign!of the functional equation}
\[
\label{eqn:FE}\tag{Ax2.2}
\Lambda(s) 
=\mathstrut  \varepsilon \overline{\Lambda}(1-s).
\]

%


\end{enumerate}
}

\medskip
\item[Axiom $3$ (Euler product)] There is a product formula
\index{Euler product}
\[
\label{eqn:EP}\tag{Ax3.1}
L(s)= \prod_{p \, {\rm prime}} F_p(p^{-s})^{-1},
\]
absolutely convergent for $\sigma > 1$.

{
\renewcommand{\labelenumi}{\alph{enumi})}
\begin{enumerate}

\smallskip

\item \emph{Polynomial:} $F_p$ is a polynomial with $F_p(0)=1$.

\item \emph{Degree:} Let $d_p$ be the degree of $F_p$.  If $p\nmid N$ then $d_p=d$,
and if $p\mid N$ then $d_p<d$.

\smallskip

%
%
%


\end{enumerate}
}

\medskip
\item[Axiom $4$ (Temperedness)]   The spectral parameters and Satake parameters satisfy precise bounds.
{
\renewcommand{\labelenumi}{\alph{enumi})}
\begin{enumerate}

\smallskip

\item \emph{Selberg bound:} \label{axiom:preciseselbergeigenvalue}
For every $j$ we have
$\Re(\mu_j) \in \{0,1\}$ and
$\Re(\nu_k) \in \{\frac12,1,\frac32,2,...\}$.

\item \emph{Ramanujan bound:} Write $F_p$ in factored form as
\[
\label{eqn:ram-bound}\tag{Ax4.1}
 F_p(z) = (1-\alpha_{1,p} z)\cdots (1-\alpha_{d_p,p} z)
 \]
 with $\alpha_{j,p} \not = 0$. If $p\nmid N$ then $|\alpha_{j,p}| = 1$ for all $j$. If $p\mid N$ then $|\alpha_{j,p}|= p^{-m_j/2}$ for some $m_j\in \{0,1,2,...\}$, and  $\sum m_j \le d-d_p$.
\end{enumerate}
}
\medskip
\item[Axiom $5$ (Central character)] There exists a Dirichlet character $\chi$ mod~$N$,
\index{central character!of an \Lfunction}
called the \term{central character} of the \Lfunction.
{
\renewcommand{\labelenumi}{\alph{enumi})}
\begin{enumerate}

\smallskip

\item \emph{Highest degree term:} For every prime $p$,
\[
\label{eqn:Fpchi}\tag{Ax5.1}
F_p(z)=1-a_p z + \cdots + (-1)^d\chi(p) z^d .
\]

\item \emph{Balanced:} We have
$\mathrm{Im} \left(\sum \mu_j +  \sum(2\nu_k+1)\right) =0$.

\item \emph{Parity:} The spectral parameters determine the
parity of the central character:
\[
\label{eqn:parityaxiom}\tag{Ax5.2}
\chi(-1) = (-1)^{\sum \mu_j +  \sum(2\nu_k+1)}.
\]
\end{enumerate}
}

\end{description}

\subsubsection{Comments on the terminology}
\label{sec:axioms_comments}

The term \term{balanced} is described by Axiom 5b).  
The summand ``$+1$'' can obviously be omitted from the condition,
but we include it for uniformity with Axiom~5c).
Note that Axiom~5c) would be problematic if it were not
assumed that the exponent on $-1$ was an integer.
If we omit the modifier `balanced' when describing an $L$-function,
then we mean a function of the form $L(s+i y)$ where 
$L(s)$ is balanced and $y\in \R$.
If $L(s)$ is a (not necessarily balanced) $L$-function, then it is
straightforward to check that there exists exactly one $y_0\in \R$
such that $L(s+i y_0)$ is balanced.

The term \term{tempered} refers to both the Selberg bound (Axiom 4a) and
the Ramanujan bound (Axiom 4b).  Neither bound has been proven for
most automorphic $L$-functions, but if those axioms fail
for an automorphic $L$-function, they must fail
in a specific way arising from the fact that the underlying representation
is unitary.  A precise description of the possibilities is given by
the \term{unitary pairing condition}, described in the appendix.

In the functional equation \eqref{eqn:FE}, the function $\overline{\Lambda}$ is the
Schwartz reflection of $\Lambda$, defined for arbitrary analytic
functions $f$ by $\overline{f}(z) = \overline{f(\overline{z})}$.
The tuple $(\varepsilon,N,\{\mu_1,\ldots,\mu_{d_1}\},\{\nu_k,\ldots,\nu_{d_2}\})$
is the \term{functional equation data} of the $L$-function.  The sign $\varepsilon$ of the functional equation has absolute value 1: to see this, apply the
functional equation twice to get $\Lambda(s) = \varepsilon\overline{\varepsilon}\Lambda(s)$.

In the Euler product, the polynomials $F_p$ are known as the
\term{local factors}, and the reciprocal roots $\alpha_{j,p}$ are
called the \term{Satake parameters} at $p$. 
If $p\mid N$ then we say $p$ is a \term{bad} prime,
and if $p\nmid N$ then $p$ is \term{good}.

It follows straight from the definition that if $L_1(s)$ and $L_2(s)$ are
analytic $L$-functions then so is $L_1(s) L_2(s)$.
And if both $L_1$ and $L_2$ are balanced, or tempered, then so is their product.
If the analytic $L$-function $L(s)$ cannot be written non-trivially as
$L(s)=L_1(s) L_2(s)$, then we say
that $L$ is \term{primitive}.  Here `non-trivially' refers to the fact
that the constant function 1 is a degree 0 $L$-function.
It follows from the Selberg orthogonality
conjecture~\cite{sel, cg} 
that $L$-functions factor uniquely into
primitive $L$-functions:

\begin{thm}\cite{cg}\label{thm:factorization} Assume the Selberg orthogonality
conjecture. If $L(s)$ is an analytic $L$-function then
\begin{equation}
L(s) = L_1(s) \cdots L_n(s)
\end{equation}
where each $L_j$ is a nontrivial primitive analytic $L$-function.
The representation is unique except for the order of the
factors.
\end{thm}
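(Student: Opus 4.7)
The plan is to prove existence by induction on degree and uniqueness via Selberg orthogonality. First, I record that the degree $d$ and the spectral data are additive under multiplication: if $L = L_1 L_2$, then multiplying the completed $L$-functions (Axiom 2) shows that the $\Gamma$-factors concatenate and the conductors multiply, so $d(L) = d(L_1) + d(L_2)$; similarly the Euler product (Axiom 3) is term-wise multiplicative, so $F_p^L(z) = F_p^{L_1}(z)\,F_p^{L_2}(z)$ at every prime. The only degree-$0$ analytic $L$-function is the constant function $1$ (from Axioms 2 and 3 with $d_1=d_2=0$ and empty Euler product one deduces $N=1$ and $L\equiv 1$), so any nontrivial factor has degree $\ge 1$ and any nontrivial factorization is into factors of strictly smaller degree.

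For existence, I proceed by strong induction on $d(L)$. If $L$ is primitive the statement holds. Otherwise $L = L_1 L_2$ nontrivially, and by induction each $L_i$ is a product of primitive analytic $L$-functions, so $L$ is as well.

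For uniqueness, suppose $L(s) = P_1 \cdots P_n = Q_1 \cdots Q_m$ with each $P_i$ and $Q_j$ primitive. From the Euler-factor multiplicativity above, the coefficient of $z^1$ in $F_p^L$ gives, at every prime $p$,
\[
a_L(p) = \sum_{i=1}^n a_{P_i}(p) = \sum_{j=1}^m a_{Q_j}(p).
\]
Fix a primitive analytic $L$-function $P$ and consider the twisted prime sum
\[
\Sigma_P(x) := \sum_{p \le x} \frac{a_L(p)\,\overline{a_P(p)}}{p}.
\]
Substituting the first factorization and invoking the Selberg orthogonality conjecture, which asserts that for primitive $L$-functions $L_1, L_2$
\[
\sum_{p \le x} \frac{a_{L_1}(p)\,\overline{a_{L_2}(p)}}{p} = \delta_{L_1, L_2}\,\log\log x + O(1),
\]
I obtain $\Sigma_P(x) = k\log\log x + O(1)$, where $k := \#\{i : P_i = P\}$. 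Substituting the second factorization instead yields $\Sigma_P(x) = \ell\log\log x + O(1)$ with $\ell := \#\{j : Q_j = P\}$. Letting $x \to \infty$ forces $k = \ell$, and since $P$ was arbitrary, the two multisets $\{P_1,\ldots,P_n\}$ and $\{Q_1,\ldots,Q_m\}$ coincide, giving uniqueness up to reordering.

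The main obstacle is not any single step but rather the careful bookkeeping in the orthogonality argument: one must verify that equality of primitive analytic $L$-functions is detected by their prime-indexed coefficients (which is exactly what Selberg orthogonality provides), and one must handle repeated factors correctly by tracking multiplicities rather than merely sets. The additivity of $a_L(p)$ over factorizations, by contrast, is immediate from the Euler product and requires no separate treatment of ramified primes.
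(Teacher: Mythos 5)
Your argument is correct and is essentially the proof of Conrey--Ghosh in the cited reference \cite{cg} (the paper itself gives no proof, only the citation): existence follows from additivity of the degree together with the fact that the only degree-$0$ element is the constant function $1$, and uniqueness follows from computing $\sum_{p\le x} a_L(p)\overline{a_P(p)}/p$ against both factorizations and applying Selberg orthogonality to read off multiplicities. The only point worth noting is that your induction tacitly uses that the degree $d=d_1+2d_2$ is a well-defined invariant of the function (not just of a chosen functional-equation datum), which holds because $d$ is recoverable from, e.g., the zero-counting asymptotics.
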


If `tempered' is included as a condition in Theorem~\ref{thm:factorization}, then each
$L_j$ is tempered.  If `balanced' is included as a condition, then
the conclusion can be written as
\begin{equation}
L(s) = L_1(s+ i y_1) \cdots L_n(s+i y_n)
\end{equation}
where each $L_j$ is balanced, $y_j\in \R$, and
$\sum d_j y_j = 0$, where $d_j$ is the degree of $L_j$.
Examples such as $\zeta(s+5i)\zeta(s-5i)$ and $L(s+6i,\chi)L(s-3i, E)$
show that a non-primitive balanced $L$-function cannot necessarily be written as
a product
of primitive balanced $L$-functions.

The motivation for the Central Character axioms comes from
the $\Q$-automorphic $L$-functions which we describe in the next section.
See the discussion preceding equations~\eqref{eqn:Fpchi2} and~\eqref{eqn:parityaxiom2}:  note, in particular, that discussion explains why Axioms~5a) and~5b) are equivalent
if the $L$-function is $\Q$-automorphic.

%


\subsection{\texorpdfstring{$\Q$}{}-automorphic \texorpdfstring{$L$}{}-functions}

For a number field $F$, let $\A_F$ denote the ring of adeles of $F$. In this section we consider $L$-functions of cuspidal automorphic representations $\pi$ of the group $\GL(n,\A_\Q)$. For such $\pi$ we will always use the same conventions as in \cite{Cogdell04}; in particular, we assume $\pi$ to be irreducible and unitary. Then $\pi$ admits a unitary central character $\omega_\pi$, which is a character of 
\[
 \Q^\times \backslash \A_\Q^\times = \R_{>0} \times \prod_{p< \infty} \Z_p^\times.
\]
There exists a unique real number $y$ and a character $\chi$ of
$\Q^\times\backslash\A_\Q^\times$ of finite order such that
$\omega_\pi=|\cdot|^{iy}\chi$. The character $\chi$ corresponds to
a Dirichlet character, also denoted by $\chi$:
\begin{equation}\label{ccexpleq2}
 \omega_\pi(x)=x^{iy}\qquad\text{for }x\in\R_{>0}.
\end{equation}
We say $\pi$ is \term{balanced} if the restriction of $\omega_\pi$ to $\R_{>0}$ is trivial,
that is, if $y=0$.
Evidently, this is equivalent to $\omega_\pi$ being of finite order. In this case $\omega_\pi$ corresponds to a Dirichlet character $\chi$. The correspondence is such that if $\omega_\pi$ factors as $\prod_{p\leq\infty}\omega_{\pi,p}$, then
\begin{equation}\label{eqn:omegap}
 \omega_{\pi,p}(p)=\chi(p)
\end{equation}
and
\begin{equation}\label{eqn:omegainfty}
 \omega_{\pi,\infty}(-1)=\chi(-1).
\end{equation}

\begin{defn}\label{def:automorphic}
 Let $\pi=\otimes_{p\leq\infty}\pi_p$ be a cuspidal automorphic representation of $\GL(d,\A_\Q)$.  Let 
 \[
  L(s,\pi)= \prod_{p < \infty} L(s,\pi_p) 
 \]
 be the finite part of the Langlands $L$-function associated to $\pi$ with respect to the standard representation of the dual group $\GL(d,\C)$.  We call $L(s,\pi)$ a \term{$\Q$-automorphic $L$-function}. 
\end{defn}

We do not consider automorphic representations for groups other then $\GL(n)$ or for number fields other than $\Q$. This is not a serious restriction, since more general automorphic representations can always be transferred to $\GL(n)$ over $\Q$, at least conjecturally; see \cite{Arthur} for some recent and deep results. This transfer may not be cuspidal, however, so our definition \emph{will} exclude some more general automorphic $L$-functions. The following examples will illustrate why restricting to $\GL(n)$ over $\Q$ is not harmful, and in fact desirable for our purposes.

First consider Hilbert modular cuspforms on a real quadratic number field $F$. As explained in \cite{RaghuramTanabe2011}, such modular forms correspond to cuspidal automorphic representations $\pi$ of $\GL(2,\A_F)$. The $L$-function $L(s,\pi)$ is a degree $2$ $L$-function \emph{over $F$}. We may consider the  automorphic induction $\mathcal{AI}_{F/\Q}(\pi)$, which is an automorphic representation of $\GL(4,\A_\Q)$. It has the same $L$-function $L(s,\pi)$, but now we consider it as a degree $4$ $L$-function \emph{over $\Q$}. If $\mathcal{AI}_{F/\Q}(\pi)$ is cuspidal, then $L(s,\pi)$ is included in our definition of automorphic $L$-function. Assume that $\mathcal{AI}_{F/\Q}(\pi)$ is not cuspidal. Then, by \textsection~3.6 of \cite{ArthurClozel1989}, $\pi$ is Galois-invariant, and therefore in the image of the base change map from $\GL(2,\A_\Q)$ to $\GL(2,\A_F)$. It follows that there exists a cuspidal, automorphic representation $\pi_\Q$ of $\GL(2,\A_\Q)$ such that $L(s,\pi)=L(s,\pi_\Q)L(s,\pi_\Q\otimes\chi_{F/\Q})$, where $\chi_{F/\Q}$ is the quadratic character corresponding to the extension $F/\Q$. We will later compare the class of $L$-functions according to Definition \ref{def:automorphic} with the class of \emph{primitive} analytic $L$-functions. It is therefore advantageous to exclude a non-primitive example like $L(s,\pi)$ from our definition of automorphic $L$-function.

As our second example, consider Siegel modular cuspforms $F$ of degree $2$. Such $F$ correspond to cuspidal automorphic representations $\pi$ of ${\rm GSp}(4,\A_\Q)$, which, at least conjecturally, can be transferred to $\GL(4,\A_\Q)$. For example, if $F$ has full level and is not a Saito-Kurokawa lifting, then the transfer is established in \cite{PitaleSahaSchmidt2014}. This transfer is again cuspidal, and thus the spin (degree $4$) $L$-function $L(s,\pi)$ is included in Definition \ref{def:automorphic}. Assume however that $F$ is a Saito-Kurokawa lifting. Then the transfer to $\GL(4,\A_\Q)$ still exists, but is no longer cuspidal. Hence, in this case, $L(s,\pi)$ is not included in our definition of automorphic $L$-function. This is desirable, since $L(s,\pi)$ 
is of the form $\zeta(s-\frac12)\zeta(s+\frac12)L(s, f)$, which is neither primitive nor satisfies the Ramanujan condition. 
Thus, it should be excluded from our comparison with the class of primitive tempered analytic $L$-functions.

We see from these examples that $L$-functions of cuspidal automorphic representations are in general
not primitive, in the sense that they may factor as products of $L$-functions of smaller degrees.
The following lemma shows that this cannot happen for $\Q$-automorphic $L$-functions.
\begin{lem}\label{Qautoprimitivelemma}
 Let $\pi$ be a cuspidal automorphic representation of \linebreak$\GL(d,\A_\Q)$. Then the $\Q$-automorphic $L$-function $L(s,\pi)$ is primitive, i.e., if
 \begin{equation}\label{Qautoprimitivelemmaeq1}
  L(s,\pi)=\prod_{j=1}^mL(s,\pi_j)
 \end{equation}
 with cuspidal automorphic representations $\pi_j$ of $\GL(d_j,\A_\Q)$ ($d_j>0$) for $1\leq j\leq
m$, then $m=1$.
\end{lem}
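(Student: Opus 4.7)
The plan is to use the Rankin--Selberg theory of Jacquet, Piatetski-Shapiro, and Shalika, which detects cuspidality via the order of the pole of $L(s,\pi\times\tilde\pi)$ at $s=1$. The key inputs are:
\begin{enumerate}
\item[(i)] For cuspidal automorphic representations $\sigma$ and $\sigma'$ of $\GL(a,\A_\Q)$ and $\GL(b,\A_\Q)$, the (finite) Rankin--Selberg $L$-function $L(s,\sigma\times\tilde\sigma')$ continues meromorphically, and at $s=1$ has a pole if and only if $\sigma\cong\sigma'$ (in which case $a=b$ and the pole is simple).
\item[(ii)] Local unramified Rankin--Selberg factors are determined by the Satake parameters: if the Satake parameters of $\sigma_p$ and $\sigma'_p$ are $\{\alpha_i\}$ and $\{\beta_j\}$, then $L(s,\sigma_p\times\tilde\sigma'_p)=\prod_{i,j}(1-\alpha_i\overline{\beta_j}p^{-s})^{-1}$.
\end{enumerate}

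First, from the assumed identity \eqref{Qautoprimitivelemmaeq1}, comparing Euler factors at each finite prime $p$ gives $L(s,\pi_p)=\prod_{j=1}^m L(s,\pi_{j,p})$. At any finite prime $p$ at which $\pi$ and all the $\pi_j$ are unramified, unique factorization of polynomials in $p^{-s}$ (after inverting and matching zeros) shows that the multiset of Satake parameters of $\pi_p$ is the disjoint union of the multisets of Satake parameters of the $\pi_{j,p}$. Consequently, for a finite set $S$ of places containing the archimedean place and all places of ramification,
\[
L^S(s,\pi\times\tilde\pi) \;=\; \prod_{j,k=1}^m L^S(s,\pi_j\times\tilde\pi_k).
\]

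Now I compute the order of the pole at $s=1$ on both sides. Since $\pi$ is cuspidal, input (i) tells us the left-hand side has a simple pole at $s=1$ (the partial $L$-function differs from the complete one by nonzero finite local factors at $s=1$, so the orders of the poles agree). On the right-hand side, group the $\pi_j$ by isomorphism class: say there are $r$ distinct classes, appearing with multiplicities $n_1,\ldots,n_r$, so $n_1+\cdots+n_r=m$. Then the factor $L^S(s,\pi_j\times\tilde\pi_k)$ contributes a simple pole precisely when $\pi_j\cong\pi_k$, and is holomorphic and nonvanishing otherwise. Thus the right-hand side has a pole at $s=1$ of order $n_1^2+\cdots+n_r^2$. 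Equating pole orders gives $\sum_i n_i^2=1$, forcing $r=1$ and $n_1=1$, i.e., $m=1$.

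The main obstacle is bookkeeping: one must check that the hypothesized identity of finite-part $L$-functions (as in Definition~\ref{def:automorphic}) really implies the equality of Satake parameters as multisets at almost every prime, and that the passage from the complete Rankin--Selberg $L$-function to its $S$-truncated version does not affect the pole order at $s=1$. Both points are standard, the second because the omitted local factors are of the form $\prod(1-\gamma p^{-s})^{-1}$ with $|\gamma|<p$ (or, for the archimedean factor, a ratio of nonvanishing $\Gamma$-products at $s=1$), so they are finite and nonzero there. Everything else is a direct consequence of Jacquet--Piatetski-Shapiro--Shalika.
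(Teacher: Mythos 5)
Your proof is correct and follows essentially the same route as the paper's: both pass to partial Rankin--Selberg $L$-functions and combine the Jacquet--Piatetski-Shapiro--Shalika pole criterion at $s=1$ with the non-vanishing of $L_S(s,\pi_j\times\tilde\pi_k)$ there. The only difference is cosmetic --- you twist by the full $\tilde\pi$ and count the exact pole order $\sum_i n_i^2$, whereas the paper twists only by $\pi_1^\vee$ and needs merely the existence of a pole on the right-hand side to conclude $\pi\cong\pi_1$ and hence $m=1$.
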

\begin{proof}
We consider partial $L$-functions and twist \eqref{Qautoprimitivelemmaeq1} by the contragredient
$\pi_1^\vee$ of $\pi_1$:
\begin{equation}\label{Qautoprimitivelemmaeq2}
  L_S(s,\pi\times\pi_1^\vee)=\prod_{j=1}^mL_S(s,\pi_j\times\pi_1^\vee).
\end{equation}
By the Corollary to Theorem 2.4 of \cite{Cogdell04}, $L_S(s,\pi_1\times\pi_1^\vee)$ has a pole at
$s=1$. By Theorem 5.2 of \cite{Shahidi1981}, $L_S(s,\pi_j\times\pi_1^\vee)$ has no zeros on ${\rm
Re}(s)=1$ for any $j$. It follows that the right hand side of \eqref{Qautoprimitivelemmaeq2} has a
pole at $s=1$. Hence so does the left hand side, which again by Theorem 2.4 of \cite{Cogdell04}
implies that $\pi=\pi_1$. In other words, we must have $m=1$.
\end{proof}

\section{\texorpdfstring{$\Q$}{}-automorphic \texorpdfstring{$L$}{}-functions and the axioms}\label{sec:Q-auto}

Now that we have defined $\Q$-automorphic $L$-functions and have identified a collection of axioms for analytic $L$-functions,
we begin to show that $\Q$-automorphic $L$-functions satisfy the axioms.

\begin{thm}\label{thm:niceness}
 Let $L(s,\pi)$ be a $\Q$-automorphic $L$-function.
There is a positive integer $N$, a pair of non-negative integers $(d_1, d_2)$ so that $d_1+2d_2=d$,
and complex numbers $\{\mu_j\}$ and $\{\nu_j\}$ such that 
\begin{equation}\label{thm:nicenesseq0}
 \Lambda(s,\pi) =\mathstrut  N^{s/2} 
 \prod_{j=1}^{d_1} \Gamma_\R(s+ \mu_j)
 \prod_{j=1}^{d_2} \Gamma_\C(s+ \nu_j)
 \cdot L(s,\pi)
\end{equation}
has  the following properties:
\begin{enumerate}
 \item \label{item:ent} $\Lambda(s,\pi)$ is entire.
 \item \label{item:bdd} $\Lambda(s,\pi)$ is bounded in vertical strips $\sigma_1 \leq \sigma \leq \sigma_2$.
 \item\label{item:FE} There exists $\varepsilon\in \C$ such that
  \begin{equation*}
   \Lambda(s,\pi)=\varepsilon \overline{\Lambda}(1-s,\pi).
  \end{equation*}
 \item \label{item:bal} If $\pi$ is balanced, then $L(s,\pi)$ is balanced.
\end{enumerate}
\end{thm}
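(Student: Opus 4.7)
The plan is to define the archimedean data from the local Langlands parameter of $\pi_\infty$, invoke Godement--Jacquet theory for parts (\ref{item:ent})--(\ref{item:FE}), and deduce part (\ref{item:bal}) from a determinant computation. First I would set $N=\prod_{p<\infty}p^{a(\pi_p)}$ using the local Artin conductor exponents (almost all zero), and let $\phi_\infty\colon W_\R\to\GL(d,\C)$ be the Langlands parameter of $\pi_\infty$. Every irreducible representation of $W_\R$ has dimension one or two, so one can write
\[
 \phi_\infty \;=\; \bigoplus_{j=1}^{d_1}\eta_j \;\oplus\; \bigoplus_{k=1}^{d_2}\mathrm{Ind}_{W_\C}^{W_\R}\chi_k,
\]
with each $\eta_j$ a quasi-character of $\R^\times$ and each $\chi_k$ a non-Galois-invariant quasi-character of $\C^\times$. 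This fixes the signature $(d_1,d_2)$ and the spectral parameters $\mu_j,\nu_k$ through the standard recipe $L(s,\eta_j)=\Gamma_\R(s+\mu_j)$ and $L(s,\mathrm{Ind}\,\chi_k)=\Gamma_\C(s+\nu_k)$, so that the product of $\Gamma$-factors in \eqref{thm:nicenesseq0} is precisely the archimedean local factor $L(s,\pi_\infty)$.

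Parts (\ref{item:ent})--(\ref{item:FE}) are then the content of Godement--Jacquet theory for cuspidal automorphic representations of $\GL(d,\A_\Q)$; see for instance Theorem 3.1 of \cite{Cogdell04}. That theory yields entireness, boundedness in vertical strips, and a functional equation $\Lambda(s,\pi)=\varepsilon(s,\pi)\Lambda(1-s,\pi^\vee)$ with $\varepsilon(s,\pi)=N^{1/2-s}\varepsilon(1/2,\pi)$. Since $\pi$ is unitary, $\pi^\vee\cong\overline{\pi}$, and a place-by-place comparison of Euler and $\Gamma$-factors gives $\Lambda(s,\overline{\pi})=\overline{\Lambda}(s,\pi)$ in the Schwartz-reflection sense. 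Combining these identities and absorbing the factor $N^{s/2}$ into the completed $\Lambda$ cancels the $s$-dependence of $\varepsilon(s,\pi)$, producing the functional equation in the form (\ref{item:FE}) with a constant $\varepsilon$; applying the equation twice forces $|\varepsilon|=1$.

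For part (\ref{item:bal}), compatibility of the local Langlands correspondence with central characters identifies $\omega_{\pi,\infty}$ with $\det\phi_\infty$ under the reciprocity isomorphism $W_\R^{\mathrm{ab}}\cong\R^\times$. A summand-by-summand determinant computation on $\R_{>0}$ shows that $\omega_{\pi,\infty}(x)=x^{\alpha}$ for some $\alpha\in\C$ whose imaginary part equals $-\mathrm{Im}\big(\sum\mu_j+\sum(2\nu_k+1)\big)$. If $\pi$ is balanced then $\omega_{\pi,\infty}|_{\R_{>0}}$ is trivial, so $\alpha=0$, and in particular $\mathrm{Im}\big(\sum\mu_j+\sum(2\nu_k+1)\big)=0$, which is exactly Axiom 5b). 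I expect the main obstacle to be this last step: parts (\ref{item:ent})--(\ref{item:FE}) reduce essentially to citations (with the caveat that the trivial character on $\GL(1)$ yields $\zeta(s)$, which fails entireness and would need to be excluded or handled separately), whereas part (\ref{item:bal}) requires careful bookkeeping of the normalization conventions linking $\phi_\infty$ to the parameters $\mu_j,\nu_k$ in order to pin down the imaginary part of $\alpha$.
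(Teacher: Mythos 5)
Your proposal matches the paper's proof in essentially every respect: items (1)--(3) are cited from Godement--Jacquet theory as presented in \cite{Cogdell04}, the functional equation is converted to the stated form using $\pi^\vee\cong\bar\pi$ for unitary $\pi$ and the Schwartz-reflection identity, and item (4) is deduced from the fact that the determinant of the archimedean Langlands parameter corresponds to the central character of $\pi_\infty$. Your caveat about $d=1$ with trivial character (where $\zeta(s)$ fails entireness) is a legitimate edge case that the paper's proof does not address explicitly.
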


In other words, $\Q$-automorphic $L$-functions satisfy Axioms 1) and 2) as described in Section~\ref{sec:axioms}.

\begin{proof} For items \eqref{item:ent}, \eqref{item:bdd} and \eqref{item:FE}, see \cite[Theorems 2.3 and 2.4]{Cogdell04}. Note that the functional equation in \cite{Cogdell04} is written as $\Lambda(s,\pi)=\varepsilon\Lambda(1-s,\tilde\pi)$, where $\tilde\pi$ is the contragredient representation; one can show that $\Lambda(1-s,\tilde\pi)=\bar\Lambda(1-s,\pi)$ for unitary $\pi$ (recall that $\bar f(s)=\overline{f(\bar s)}$ for a function of a complex variable).

Item \eqref{item:bal} follows by considering the local Langlands parameter at the archimedean place, keeping in mind that the determinant of this parameter corresponds to the central character of $\pi_\infty$.
\end{proof}

The integer $N$ appearing in the functional equation equals $\prod_p p^{a(\pi_p)}$, where $a(\pi_p)$ is the exponent of the conductor of the local representation $\pi_p$. Let $M=\prod_p p^{a(\omega_{\pi,p})}$, where $a(\omega_{\pi,p})$ is the exponent of the conductor of $\omega_{\pi,p}$, the central character of $\pi_p$. By reduction to the supercuspidal case and using the existence of the distinguished vector exhibited in \cite{JacquetPSShalika1981}, one can easily prove that $a(\omega_{\pi,p})\leq a(\pi_p)$. Consequently, $M|N$. We may therefore consider $\chi$, which originally is a Dirichlet character mod $M$, as a Dirichlet character mod $N$.  This character is the character required by Axiom 5).

By definition, $L(s,\pi)$ is an Euler product
\begin{equation}\label{eqn:EP2}
 L(s,\pi)= \prod_{p \, {\rm prime}} F_p(p^{-s})^{-1},
\end{equation}
where each $F_p$ is a polynomial of degree at most $d$ (as required by Axiom 3b)), with $F_p(0)=1$.
Considering Langlands parameters at the non-archimedean places, it
follows
from \eqref{eqn:omegap} that
\begin{equation}\label{eqn:Fpchi2}
 F_p(z)=1+ \cdots + (-1)^d\chi(p) z^d;
\end{equation}
as required by Axioms 3a) and 5a).

Considering Langlands parameters at the archimedean place, it follows 
that
\begin{equation}\label{ccexpleq1}
  \omega_\pi(x)=x^{\mathrm{Im}\left(\sum \mu_j +  \sum(2\nu_k+1)\right)}\qquad\text{for }x\in\R_{>0},
\end{equation}
and so, from \eqref{eqn:omegainfty}, it follows that
\begin{equation}\label{eqn:parityaxiom2}
 \chi(-1) = (-1)^{\sum \mu_j +  \sum(2\nu_j+1)},
\end{equation}
showing that $\Q$-automorphic $L$-functions satisfy Axioms 5b) and 5c).

Conjecturally, each local component $\pi_p$ of a cuspidal, automorphic representation $\pi$ as in Definition \ref{def:automorphic} is \emph{tempered}; see \cite{Sarnak05}, or Conjecture 1.6 of \cite{Clozel1990}. The following lemma lists some consequences of temperedness for the spectral parameters and Satake parameters.

\begin{lem}\label{lem:tempered}
Assume that $\pi=\otimes\pi_p$ is a cuspidal automorphic representation of
$\GL(d,\A_\Q)$. Let $\mu_j,\nu_j$ be as in Theorem \ref{thm:niceness}. Let the polynomial $F_p(z)$ in \eqref{eqn:Fpchi2} be factored as
 \begin{equation}\label{lem:temperedeq1}
  F_p(z) = (1-\alpha_{1,p} z)\cdots (1-\alpha_{d_p,p} z)
 \end{equation}
 with $0\leq d_p\leq d$ and $\alpha_{j,p}\in\C$.
 \begin{enumerate}
  \item Assume that $\pi_\infty$ is tempered. Then for every $j$ we have $\Re(\mu_j) \in \{0,1\}$ and $\Re(\nu_j) \in \{\frac12,1,\frac32,2,...\}$.
  \item Assume that $\pi_p$ is tempered for $p<\infty$ with $p\nmid N$. Then $|\alpha_{j,p}|=1$ for all $j\in\{1,\ldots,d\}$.
  \item Assume that $\pi_p$ is tempered for $p<\infty$ with $p|N$. Then $|\alpha_{j,p}|= p^{-m_j/2}$ for some $m_j\in \{0,1,2,...\}$, and  $\sum m_j \le d-d_p$.
 \end{enumerate}
\end{lem}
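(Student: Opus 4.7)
The plan is to apply the local Langlands correspondence at each place and then invoke the classification of tempered irreducible admissible representations of $\GL(d)$ over a local field. At each place $v$, temperedness of $\pi_v$ means the Langlands parameter $\phi_v$ has bounded image on the Weil group, and the shape of the $L$-factor is read off from the irreducible constituents of $\phi_v$, modified by inertia and monodromy in the non-archimedean case.

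For (1), I would decompose $\phi_\infty\colon W_\R\to\GL(d,\C)$ into irreducible subrepresentations, each of which is either one-dimensional (contributing a $\Gamma_\R$-factor) or two-dimensional irreducible (contributing a $\Gamma_\C$-factor). One-dimensional representations are pulled back from characters of $\R^\times$, namely $|\cdot|^{\mu}$ or $\sgn(\cdot)|\cdot|^{\mu}$, giving $\Gamma_\R(s+\mu)$ or $\Gamma_\R(s+\mu+1)$; unitarity on $\R^\times$ forces $\Re(\mu)=0$, whence $\Re(\mu_j)\in\{0,1\}$. Irreducible two-dimensional parameters are induced from characters $z\mapsto(z/|z|)^{m}|z|^{2t}$ of $\C^\times$ with $m\in\Z_{\geq 1}$, giving $\Gamma_\C(s+t+m/2)$; unitarity gives $\Re(t)=0$, so $\Re(\nu_j)=m/2\in\{\tfrac12,1,\tfrac32,\ldots\}$.

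For (2), the hypothesis $p\nmid N$ means $\pi_p$ has a nonzero $\GL(d,\Z_p)$-fixed vector, so $\pi_p$ is a full unramified principal series $\mathrm{Ind}(|\cdot|^{s_1}\otimes\cdots\otimes|\cdot|^{s_d})$; by the Satake isomorphism the reciprocal roots of $F_p$ are $\alpha_{j,p}=p^{-s_j}$, and temperedness forces $\Re(s_j)=0$, giving $|\alpha_{j,p}|=1$ for all $j$.

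For (3), the substantive case, I would use the Bernstein--Zelevinsky/Tadi\'c classification: any tempered $\pi_p$ is unitarily parabolically induced from tempered discrete series, $\pi_p=\mathrm{Ind}(\sigma_1\otimes\cdots\otimes\sigma_r)$, with each $\sigma_i$ a discrete series of $\GL(n_i,\Q_p)$ and $\sum n_i=d$, so that $L(s,\pi_p)=\prod_i L(s,\sigma_i)$. Each $\sigma_i$ has the form $\mathrm{St}(\tau_i,k_i)$ where $\tau_i$ is a unitary supercuspidal of $\GL(m_i,\Q_p)$ with $m_ik_i=n_i$, and $L(s,\sigma_i)=L(s+\tfrac{k_i-1}{2},\tau_i)$. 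This $L$-factor is $1$ unless $\tau_i$ is an unramified character $|\cdot|^{it_i}$ (forcing $m_i=1$), in which case it equals $(1-\alpha_{i,p}\,p^{-s})^{-1}$ with $|\alpha_{i,p}|=p^{-(k_i-1)/2}$. Hence $d_p$ counts the ``good'' indices for which $\sigma_i$ is an unramified Steinberg twist, each such index contributes a Satake parameter with $m_j:=k_i-1\geq 0$, and summing gives $\sum m_j=\sum_{\mathrm{good}}(n_i-1)\leq\sum_i n_i-d_p=d-d_p$, with equality exactly when every $\sigma_i$ is of good type. The main point requiring care is a bookkeeping one---verifying the normalization of $\mathrm{St}(\tau,k)$ so that the absolute value $p^{-(k-1)/2}$ emerges correctly---rather than a genuine conceptual obstacle.
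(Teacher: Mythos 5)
Your proof is correct and reaches the paper's conclusions by what is, up to the local Langlands dictionary, the same argument. The only real difference is which side of the correspondence the bookkeeping for part (3) is done on: the paper decomposes the Weil--Deligne parameter as $\varphi=\bigoplus_j\rho_j\otimes\mathrm{sp}(n_j)$ and reads off the $L$-factor from Rohrlich's recipe, so that only the unramified one-dimensional $\rho_j$ contribute, with $|\alpha_{j,p}|=p^{-(n_j-1)/2}$ and $m_j=n_j-1$; you instead decompose $\pi_p$ on the representation-theoretic side as an induction from generalized Steinbergs $\mathrm{St}(\tau_i,k_i)$, with only the unramified-character twists contributing and $m_j=k_i-1$. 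These match term by term under $\mathrm{St}(\tau_i,k_i)\leftrightarrow\rho_i\otimes\mathrm{sp}(k_i)$, and your count $\sum m_j=\sum_{\text{good}}(n_i-1)\le d-d_p$ is exactly the paper's. Your treatments of (1) and (2) likewise agree with the paper's (which handles (1) by the bounded-image criterion and quotes (2) as a special case of (3)); your normalization $L(s,\mathrm{St}(\tau,k))=L(s+\tfrac{k-1}{2},\tau)$ is the correct one, so the flagged bookkeeping concern is resolved.
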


This Lemma implies that $\Q$-automorphic $L$-functions satisfy Axiom~4):
the first item in the statement is about Axiom~4a) and the second two are about Axiom~4b).

\begin{proof}
(1) follows from the fact that a representation of $\GL(d,\R)$ is tempered if and only if its Langlands parameter has bounded image. For (2), see \cite{Sarnak05} (also, (2) is a special case of (3)). For (3), we consider the local parameter of $\pi_p$, which is an admissible homomorphism $\varphi:W'(\bar\Q_p/\Q_p)\to\GL(d,\C)$. Here, $W'(\bar\Q_p/\Q_p)$ is the Weil-Deligne group; see \cite{Rorlich1994} for precise definitions, and \cite{Kudla1994} for properties of the local Langlands correspondence. Let ${\rm sp}(n)$ be the $n$-dimensional indecomposable representation of $W'(\bar\Q_p/\Q_p)$ defined in \S5 of \cite{Rorlich1994}. We can write
\begin{equation}\label{lem:temperedeq2}
 \varphi=\bigoplus_{j=1}^t\rho_j\otimes{\rm sp}(n_j),
\end{equation}
with uniquely determined irreducible representations $\rho_j$ of the Weil group $W(\bar\Q_p/\Q_p)$, and uniquely determined positive integers $n_i$. Evidently, $d=\sum_{j=1}^t\dim(\rho_j)n_j$. The Euler factor $L(s,\pi_p)$ is equal to the L-factor of $\varphi$, as defined in \S8 of \cite{Rorlich1994}. By the Proposition in \S8 of \cite{Rorlich1994},
\begin{equation}\label{lem:temperedeq3}
 L(s,\pi)=\prod_{j=1}^tL(s+n_j-1,\rho_j).
\end{equation}
Assume that $\rho_1,\ldots,\rho_r$ are unramified characters of $W(\bar\Q_p/\Q_p)$, and that $\rho_{r+1},\ldots,\rho_t$ are either ramified characters or of dimension greater than $1$. Then
\begin{equation}\label{lem:temperedeq4}
 L(s,\pi)=\prod_{j=1}^rL(s+n_j-1,\rho_j)=\prod_{j=1}^r\frac1{1-\rho_j(p)p^{-s-n_j+1}},
\end{equation}
where we identified $\rho_1,\ldots\rho_r$ with characters of $\Q_p^\times$. Comparison with \eqref{lem:temperedeq1} shows that $d_p=r$ and, after an appropriate permutation,
\begin{equation}\label{lem:temperedeq5}
 \alpha_{j,p}=\rho_j(p)p^{-n_j+1}\qquad\text{for }j=1,\ldots,r.
\end{equation}
Now $\pi_p$ is tempered if and only if the representations $|\cdot|_p^{(n_i-1)/2}\otimes\rho_i$ are bounded for $i=1,\ldots,t$; see \textsection~2.2 of \cite{Kudla1994}. In particular, assuming $\pi_p$ is tempered, we have $|\rho_j(p)|=p^{(n_j-1)/2}$, and thus $|\alpha_{j,p}|=p^{-(n_j-1)/2}$. Setting $m_j=n_j-1$, we have $|\alpha_{j,p}|=p^{-m_j/2}$ and
$$
 \sum_{j=1}^{d_p}m_j=\sum_{j=1}^{d_p}n_j-d_p\leq d-d_p.
$$
This concludes the proof.
\end{proof}

\begin{prop}\label{prop:automorphicanalytic}
 Assume that $\pi=\otimes\pi_p$ is a cuspidal automorphic representation of $\GL(d,\A_\Q)$ such that each local component $\pi_p$ is tempered. Then $L(s,\pi)$ is a tempered analytic $L$-function in the sense of Section~\ref{sec:defn_analytic}.
If $\pi$ is balanced, then $L(s,\pi)$ is balanced.
\end{prop}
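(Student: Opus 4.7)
The plan is to verify the axioms of Section~\ref{sec:defn_analytic} for $L(s,\pi)$ one at a time; no genuinely new work is required, since the proposition is essentially a packaging of Theorem~\ref{thm:niceness}, Lemma~\ref{lem:tempered}, and the intervening discussion.

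Axioms~1b), 2a), and~2b) follow directly from Theorem~\ref{thm:niceness}: items~\eqref{item:ent}--\eqref{item:FE} supply the conductor $N$, the spectral parameters $\mu_j,\nu_k$, the entirety of $\Lambda(s,\pi)$ (so that $L(s,\pi)$ is meromorphic with any poles lying on $\sigma=1$, since the Gamma factors appearing in the denominator of $L(s,\pi)=\Lambda(s,\pi)\big/\bigl(N^{s/2}\prod\Gamma_\R(s+\mu_j)\prod\Gamma_\C(s+\nu_k)\bigr)$ are nowhere vanishing), boundedness in vertical strips, and the functional equation. The Dirichlet series of Axiom~1a) arises by expanding~\eqref{eqn:EP2}, and its absolute convergence for $\sigma>1$ follows from the Ramanujan bounds $|\alpha_{j,p}|\le 1$ in parts~(2) and~(3) of Lemma~\ref{lem:tempered}, which apply by our temperedness hypothesis. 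Axiom~4 is precisely the statement of Lemma~\ref{lem:tempered}.

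For Axiom~3, the Euler product~\eqref{eqn:EP2} is part of Definition~\ref{def:automorphic}, and the polynomial form, the normalization $F_p(0)=1$, the bound $d_p\le d$, and the equivalence $d_p=d\iff p\nmid N$ are recorded in the paragraph surrounding~\eqref{eqn:EP2}. For Axiom~5, the inequality $a(\omega_{\pi,p})\le a(\pi_p)$ gives $M\mid N$, allowing us to regard $\chi$ as a Dirichlet character mod~$N$; Axioms~5a) and~5c) are then the identities~\eqref{eqn:Fpchi2} and~\eqref{eqn:parityaxiom2}. Finally, for the closing sentence of the proposition, if $\pi$ is balanced then $y=0$ in~\eqref{ccexpleq1}, so $\mathrm{Im}\bigl(\sum\mu_j+\sum(2\nu_k+1)\bigr)=0$, which is Axiom~5b); this also coincides with Theorem~\ref{thm:niceness}\eqref{item:bal}.

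There is no substantive obstacle here—this is a synthesis theorem whose content has already been established. The only care needed is bookkeeping: to flag explicitly where the temperedness hypothesis is used (for Axioms~1a) and~4), via Lemma~\ref{lem:tempered}) and where the balanced hypothesis on $\pi$ is used (for Axiom~5b), via~\eqref{ccexpleq1}).
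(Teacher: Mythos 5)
Your verification of the balanced case is essentially the paper's proof: cite Theorem~\ref{thm:niceness} for Axioms~1--2, the discussion around \eqref{eqn:EP2}, \eqref{eqn:Fpchi2} and \eqref{eqn:parityaxiom2} for Axioms~3 and~5, and Lemma~\ref{lem:tempered} for Axiom~4 (and, as you note, for absolute convergence in Axiom~1a). That part is fine.

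The gap is that you never treat the case where $\pi$ is \emph{not} balanced, and your axiom-by-axiom verification does not go through there. Recall the convention of Section~\ref{sec:axioms_comments}: the five axioms define a tempered \emph{balanced} analytic $L$-function, and ``tempered analytic $L$-function'' means a function of the form $L_0(s+iy)$ with $L_0$ tempered balanced and $y\in\R$. If $\pi$ is not balanced, Axiom~5b) fails outright for $L(s,\pi)$, and moreover the identities you invoke for Axioms~5a) and~5c) are derived from \eqref{eqn:omegap} and \eqref{eqn:omegainfty}, which the paper states only under the assumption that $\omega_\pi$ has finite order (equivalently $y=0$); for general $\pi$ the top coefficient of $F_p$ is $(-1)^d\omega_{\pi,p}(p)$, which is $(-1)^d\chi(p)$ only up to the unitary twist $p^{-iy}$. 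So a direct check of the axioms cannot establish the first sentence of the proposition for non-balanced $\pi$. The missing step---which is the second half of the paper's proof---is the reduction: choose $y\in\R$ so that $|\cdot|^{iy}\otimes\pi$ is balanced, observe that $L(s,|\cdot|^{iy}\otimes\pi)=L(s+iy,\pi)$ is a tempered balanced analytic $L$-function by the argument you gave, and conclude that $L(s,\pi)$ is a tempered analytic $L$-function by the definitional convention. With that paragraph added, your argument is complete and matches the paper's.
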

\begin{proof}
In the balanced case, this follows from Theorem \ref{thm:niceness}, equations \eqref{eqn:Fpchi2} and \eqref{eqn:parityaxiom2}, and Lemma \ref{lem:tempered}. Assume $\pi$ is not balanced. Then there exists $y\in\R$ such that $|\cdot|^{iy}\otimes\pi$ is balanced. Hence $L(s,|\cdot|^{iy}\otimes\pi)=L(s+iy,\pi)$ is a balanced tempered analytic $L$-function. Consequently, $L(s,\pi)$ is a tempered analytic $L$-function.
\end{proof}

\section{Algebraic and arithmetic \texorpdfstring{$L$}{}-functions}\label{sec:aa}

The most widely studied $L$-functions are those arising from arithmetic
objects such as elliptic and higher-genus curves, holomorphic modular forms,
number fields, Artin representations,
Galois representations, and motives.   We give two characterizations of
such $L$-functions: one in terms of their Dirichlet coefficients, and the
other in terms of their spectral parameters.


\subsection{Analytic \texorpdfstring{$L$}{}-functions of algebraic type}\label{sec:analytic-of-alg-type}

Building on ideas of Stark and Hejhal, Booker, Str\"ombergsson,
and Venkatesh~\cite{bsv}, carried out computations that support the
conjecture that if $\lambda>\frac14$ is the Laplacian eigenvalue of a Maass
form, then $\lambda$ is transcendental.  Since the $\Gamma$-shifts
in the associated $L$-function have imaginary part $\pm \sqrt{\lambda - \frac14}$,
one expects that the imaginary part of any
$\Gamma$-shift in a primitive balanced analytic $L$-function is either 0 or
transcendental.  This motivates the following definition.
\begin{defn}\label{defn:Lalgebraictype}  Suppose $L(s)$ is an analytic $L$-function with
spectral parameters $\{\mu_j\}$ and $\{\nu_k\}$.  We say that $L(s)$
is \term{of algebraic type} if either
every $\mu_j$ and $\nu_k$ is in $\Z$, or
every $\mu_j$ and $\nu_k$ is in $\frac12 + \Z$.
The integer $w_{alg} = 2 \max\{0, \nu_1,\ldots,\nu_{d_2}\}$ is
called the \term{algebraic weight} of the $L$-function.
\end{defn}

The second option: $\mu_j$ and $\nu_k$ are in $\frac12 + \Z$,
implies that there are no $\mu_j$, 
because if $L$ is tempered then $\mu_j\in \{0,1\}$,
and in general (see the Appendix on the unitary pairing condition),
$\mu_j \in (-\frac12,\frac12) \cup (\frac12,\frac32)$.

In Definition~1.6 of \cite{Clozel1990}, Clozel defined the notion of \emph{algebraic}
automorphic representation of $\GL(n)$ over a number field.  We use the term
\emph{of algebraic type} because our notion applies to the $L$-function, not
its underlying representation.  See Section~\ref{sec:Q-analytic-automorphic}
for more details.

The term \emph{algebraic weight} was chosen because if $M$ is a motive
of weight $w$, 
then by Serre's recipe \cite{Serre1969} $L(s,M)$ will have algebraic weight~$w$.

%

\subsection{Analytic \texorpdfstring{$L$}{}-functions of arithmetic type}
\label{sec:defn_arithmetic}

The computations of Booker, Str\"ombergsson,
and Venkatesh~\cite{bsv}
also support the conjecture that in general the Fourier coefficients of
Maass forms with Laplacian eigenvalue $\lambda > \frac14$ are
transcendental and algebraically independent except for the constraints
imposed by the Hecke relations.  Thus we have a complement to the previous
definition, involving a condition on the Dirichlet coefficients.

\begin{defn}\label{defn:Larithmetictype}  Suppose $L(s) = \sum a_n n^{-s}$ is an analytic
$L$-function.  We say that $L(s)$ is \term{of arithmetic type}
if there exists $w_{ar} \in \Z$ 
and a number field $F$ such that
$a_n n^{{w_{ar}}/2}\in \mathcal{O}_F$ for all $n$.
The smallest such $F$ is called the
\term{field of coefficients}, and the smallest such $w_{ar}$ 
is called the \term{arithmetic weight} of the $L$-function.
\end{defn}


An analytic $L$-function with algebraic coefficients is not
necessarily of arithmetic type, as shown by the example
$L(s) = L(s,\chi)L(s,E)$, where $\chi$ is a
primitive Dirichlet character and $E/\Q$ is an elliptic curve.
As indicated in Figure~\ref{fig:diagram2}, it is conjectured that
such examples must be non-primitive.

As we will explain in Section~\ref{sec:connections}, by combining existing
conjectures one obtains the conjecture that a primitive balanced analytic $L$-function
is of algebraic type if and only if it is of arithmetic type.
Furthermore, we have the \term{Hodge conjecture}:  $w_{alg} = w_{ar}$.

\subsection{\texorpdfstring{$\Q$}{}-automorphic \texorpdfstring{$L$}{}-functions of algebraic type}\label{sec:Q-analytic-automorphic}
Let $\A_F$ be the ring of adeles of a number field $F$. In \cite{Clozel1990}, Clozel considered isobaric automorphic representations of $\GL(n,\A_F)$. He called such a representation \term{algebraic} if the local Langlands parameters at all archime\-dean places satisfy certain integrality conditions. More generally, for a connected, reductive $F$-group $G$ and an automorphic representation of $G(\A_F)$, Buzzard and Gee in \cite{BuzzardGee2014} defined the notions of \term{$C$-algebraic} and \term{$L$-algebraic}. If $G=\GL(n)$ and $\pi$ is isobaric, then
$$
 \pi\text{ is algebraic }\;\Longleftrightarrow\;\pi\text{ is $C$-algebraic }\;\Longleftrightarrow\;\pi|\cdot|^{\frac{n-1}2}\text{ is $L$-algebraic}.
$$
In the case of a tempered automorphic representation $\pi\cong\otimes\pi_p$ of $\GL(d,\A_\Q)$, the notions of $C$-algebraic and $L$-algebraic can easily be expressed in terms of the archime\-dean Euler factor. Recall that this factor is of the general form
\begin{equation}\label{archLgeneraleq}
 L(s,\pi_\infty)=\prod_{j=1}^{d_1}\Gamma_\R(s+\mu_j)\prod_{k=1}^{d_2}\Gamma_\C(s+\nu_k)
\end{equation}
with complex numbers $\mu_j$ and $\nu_k$, and $d=d_1+2d_2$.

\begin{lem}\label{LCalglemma}
 Let $\pi\cong\otimes\pi_p$ be an automorphic representation of\linebreak $\GL(d,\A_\Q)$ such that $\pi_\infty$ is tempered. Let $L(s,\pi_\infty)$ be as in \eqref{archLgeneraleq}.
 \begin{enumerate}
  \item Assume that $d$ is even. Then
    \begin{itemize}
     \item $\pi$ is $C$-algebraic if and only if $d_1=0$ and $\nu_k\in\frac12+\Z_{\geq0}$ for $k=1,\ldots,d/2$.
     \item $\pi$ is $L$-algebraic if and only if $\mu_j\in\{0,1\}$ for $j=1,\ldots,d_1$ and $\nu_k\in\Z_{>0}$ for $k=1,\ldots,d_2$.
    \end{itemize}
  \item Assume that $d$ is odd. Then $\pi$ is $C$-algebraic if and only if $\pi$ is $L$-algebraic if and only if  $\mu_j\in\{0,1\}$ for $j=1,\ldots,d_1$ and $\nu_k\in\Z_{>0}$ for $k=1,\ldots,d_2$.
 \end{enumerate}
\end{lem}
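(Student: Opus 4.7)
The plan is to decompose the archimedean Langlands parameter $\phi_\infty\colon W_\R\to\GL(d,\C)$ of $\pi_\infty$ into its irreducible constituents, read off the precise relationship between those constituents and the gamma-factor parameters $(\mu_j,\nu_k)$, and then translate the integrality conditions defining $L$- and $C$-algebraicity (using the equivalence, recorded just before the lemma, that $\pi$ is $C$-algebraic if and only if $\pi|\cdot|^{(d-1)/2}$ is $L$-algebraic) into the numerical conditions claimed.

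First I would set up the dictionary between Langlands parameter data and the shifts inside the archimedean $L$-factor. An irreducible $1$-dimensional constituent of $\phi_\infty$ has the form $\chi|_{\R^\times}=\sgn^\epsilon\cdot|\cdot|^t$ with $\epsilon\in\{0,1\}$ and $t\in\C$; its $L$-factor is $\Gamma_\R(s+t+\epsilon)$, so $\mu_j=t+\epsilon$, while its infinitesimal-character contribution is $t$, read off from $\phi_\infty|_{\C^\times}(z)=(z\bar z)^t$. An irreducible $2$-dimensional constituent is induced from a character $\chi_{m,t}(z)=(z/\bar z)^m(z\bar z)^t$ of $\C^\times$ with $2m\in\Z_{>0}$ and $t\in\C$; its $L$-factor is $\Gamma_\C(s+m+t)$, so $\nu_k=m+t$, and its infinitesimal-character contribution is the pair $\{m+t,-m+t\}$. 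Temperedness of $\pi_\infty$ forces $t\in i\R$ in each case.

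By definition, $\pi$ is $L$-algebraic iff all eigenvalues of the infinitesimal character of $\pi_\infty$ lie in $\Z$. Applied to a $1$-dimensional piece, this forces $t\in\Z\cap i\R=\{0\}$, hence $\mu_j=\epsilon\in\{0,1\}$. Applied to a $2$-dimensional piece, the requirements $\pm m+t\in\Z$ force $t=0$ and $m\in\Z_{>0}$, so $\nu_k=m\in\Z_{>0}$. This settles the $L$-algebraic characterization in both parts. For $C$-algebraicity, twisting by $|\cdot|^{(d-1)/2}$ shifts every infinitesimal-character eigenvalue (and every $\mu_j,\nu_k$) by $(d-1)/2$, and the criterion becomes that the shifted eigenvalues lie in $\Z$. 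When $d$ is odd the shift is an integer, and the conditions collapse to those of the $L$-algebraic case, proving part~(2). When $d$ is even the shift is a half-integer: on a $1$-dimensional piece the requirement $t+(d-1)/2\in\Z$ has no solution with $t\in i\R$, forcing $d_1=0$; on a $2$-dimensional piece the requirement $\pm m+t+(d-1)/2\in\Z$ forces $t=0$ and $m\in\tfrac12+\Z_{\geq 0}$, giving $\nu_k\in\tfrac12+\Z_{\geq 0}$. This yields part~(1).

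The main obstacle I expect is not conceptual but bookkeeping: nailing down the precise dictionary between $\phi_\infty|_{\C^\times}$, the infinitesimal character, and the shifts inside the $\Gamma_\R$- and $\Gamma_\C$-factors, so that the four numerical cases line up correctly. A related point to handle carefully is ruling out $m=0$, which would correspond to a reducible $2$-dimensional induced representation rather than an irreducible constituent; this is what excludes $\nu_k=0$ in the $L$-algebraic case (giving $\nu_k\in\Z_{>0}$) while permitting $\nu_k=\tfrac12$ in the even $C$-algebraic case (where $m=\tfrac12\neq 0$ is still allowed).
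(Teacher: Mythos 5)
Your argument is correct and is precisely the ``straightforward'' verification that the paper's one-line proof alludes to: decompose $\phi_\infty$ into irreducible representations of $W_\R$, use the standard recipe matching one- and two-dimensional constituents to $\Gamma_\R$- and $\Gamma_\C$-factors, and translate the Buzzard--Gee integrality conditions (with the half-integer shift by $(d-1)/2$ distinguishing the parities of $d$). Your explicit handling of the $m=0$ reducibility issue, which resolves the $\Gamma_\C(s)=\Gamma_\R(s)\Gamma_\R(s+1)$ ambiguity, is a worthwhile detail the paper leaves implicit.
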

\begin{proof}
This follows in a straightforward manner from Definitions~5.7 and~5.9 of \cite{BuzzardGee2014}, and the well-known recipe of attaching $\Gamma$-factors to representations of the real Weil group.
\end{proof}

\begin{rmk} Suppose $L(s,\pi)$ is a $\Q$-automorphic $L$-function, coming from a unitary cuspidal automorphic representation $\pi=\otimes_{p\leq\infty}\pi_p$ of $\GL(d,\A_\Q)$, as in Definition \ref{def:automorphic}.  In particular, as shown in the course of Section~\ref{sec:Q-auto}, $L(s,\pi)$ is an analytic $L$-function.
Thus, we can say that $L(s,\pi)$
is \term{of algebraic type} if it satisfies the conditions of
Definition~\ref{defn:Lalgebraictype}.
\end{rmk}

Therefore by the lemma we have:

%
%

\begin{rmk}\label{rem:Q1cupQ2}
By the Ramanujan conjecture, all local components $\pi_p$ of the cuspidal automorphic representation $\pi$ are tempered. Assuming this is the case, we see that $L(s,\pi)$ is of algebraic type if and only if $\pi$ is either $C$-algebraic or $L$-algebraic.
\end{rmk}


\subsection{\texorpdfstring{$\Q$}{}-automorphic \texorpdfstring{$L$}{}-functions of arithmetic type}\label{ssec:arithmetic}
Let $G$ be a connected, reductive group over the number field $F$, and let $\pi\cong\otimes\pi_v$ be an automorphic representation of $G(\A_F)$. Buzzard and Gee \cite{BuzzardGee2014} define the notions of $\pi$ being \term{$C$-arithmetic} and \term{$L$-arithmetic} in terms of the Satake parameters of $\pi_v$ at almost all places. For $G=\GL(n)$ it is true that
$$
 \pi\text{ is $C$-arithmetic }\;\Longleftrightarrow\;\pi|\cdot|^{\frac{n-1}2}\text{ is $L$-arithmetic}.
$$
The conditions can easily be reformulated in terms of $L$-functions:

\begin{lem}\label{LCarithlemma}
 Let $\pi\cong\otimes\pi_p$ be an automorphic representation of \linebreak$\GL(d,\A_\Q)$. Let $S$ be a finite set of primes such that $\pi_p$ is unramified for primes $p\notin S$. Let
 \begin{equation}\label{LCarithlemmaeq1}
  L(s,\pi_p)=\frac1{(1-\alpha_{p,1}p^{-s})\cdot\ldots\cdots(1-\alpha_{p,d}p^{-s})}
 \end{equation}
 be the Euler factor for $p\notin S$.
 \begin{enumerate}
  \item Assume that $d$ is even. Then
    \begin{itemize}
     \item $\pi$ is $C$-arithmetic if and only if there exists a number field $E$ such that $\alpha_{p,1}\sqrt{p},\ldots,$ $\alpha_{p,d}\sqrt{p}\in E$ for almost all $p\notin S$.
     \item $\pi$ is $L$-arithmetic if and only if there exists a number field $E$ such that $\alpha_{p,1},\ldots,$ $\alpha_{p,d}\in E$ for almost all $p\notin S$.
    \end{itemize}
  \item Assume that $d$ is odd. Then $\pi$ is $C$-arithmetic if and only if $\pi$ is $L$-arithmetic if and only if there exists a number field $E$ such that $\alpha_{p,1},\ldots,\alpha_{p,d}\in E$ for almost all $p\notin S$.
 \end{enumerate}
\end{lem}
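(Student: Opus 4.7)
The plan is to derive both parts directly from the Buzzard--Gee definitions of $C$-arithmetic and $L$-arithmetic for $\GL(d)$, in complete analogy with the proof of Lemma~\ref{LCalglemma}, by exploiting the twist relation
$$
 \pi\text{ is $C$-arithmetic}\;\Longleftrightarrow\;\pi|\cdot|^{(d-1)/2}\text{ is $L$-arithmetic}
$$
recalled in the paragraph preceding the lemma.

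First I would dispatch the $L$-arithmetic half of both (1) and (2). By the relevant definition in \cite{BuzzardGee2014} (specialized to $G=\GL(d)$), $\pi$ is $L$-arithmetic precisely when there exists a number field $E$ containing the Satake parameters $\alpha_{p,1},\ldots,\alpha_{p,d}$ at almost every unramified prime $p\notin S$. This matches the stated $L$-arithmeticity condition in both cases, with no further argument required.

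Next I would handle $C$-arithmetic via the twist. The local Langlands parameter of $|\cdot|^t$ at $p$ sends geometric Frobenius to $p^{-t}$ (since $|p|_p=p^{-1}$), so $(\pi\otimes|\cdot|^t)_p$ has Satake parameters $\alpha_{p,j}p^{-t}$. Taking $t=(d-1)/2$, the twisted parameters become $\alpha_{p,j}p^{-(d-1)/2}$. In part~(2), $d$ is odd and $(d-1)/2\in\Z$, so $p^{-(d-1)/2}\in\Q$; hence these twisted parameters lie in a number field $E$ if and only if the $\alpha_{p,j}$ themselves do, forcing $C$-arithmeticity and $L$-arithmeticity to coincide. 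In part~(1), $d$ is even, so $p^{-(d-1)/2}=p^{1/2}\cdot p^{-d/2}$ with $p^{-d/2}\in\Q$; hence $\alpha_{p,j}p^{-(d-1)/2}\in E$ if and only if $\alpha_{p,j}\sqrt{p}\in E$, yielding the stated characterization of $C$-arithmeticity.

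The main obstacle, exactly as in Lemma~\ref{LCalglemma}, is pinning down sign and dualization conventions: one must verify that the Frobenius eigenvalue attached to $|\cdot|^t$ is $p^{-t}$ rather than $p^{t}$, and that this convention agrees with the one Buzzard--Gee use to define $L$-arithmeticity via the dual group. Once these are fixed, the remainder of the argument is the elementary observation that integer powers of $p$ lie in any number field $E$, so multiplying or dividing by them does not affect membership in $E$.
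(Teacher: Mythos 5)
Your argument is correct and is exactly the computation the paper has in mind: the paper states this lemma without proof, treating it (like Lemma~\ref{LCalglemma}) as immediate from the Buzzard--Gee definitions together with the twisting relation $\pi$ is $C$-arithmetic $\Leftrightarrow$ $\pi|\cdot|^{(d-1)/2}$ is $L$-arithmetic, and your proposal just writes out that the twist multiplies each Satake parameter by $p^{-(d-1)/2}$, which is a rational (hence harmless) factor when $d$ is odd and a rational times $\sqrt{p}$ when $d$ is even. Your attention to the Frobenius normalization ($|p|_p=p^{-1}$, so the twisted parameter is $\alpha_{p,j}p^{-t}$) is the right point to be careful about, and it is handled correctly.
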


The following is equivalent to Definition~\ref{defn:Larithmetictype}, but we include this formulation because
it is stated in terms of the parameters which are more natural for 
$\Q$-automorphic $L$-functions.

\begin{defn}\label{def:automorphic-arith}
 Let $L(s,\pi)$ be a $\Q$-automorphic $L$-function, coming from a cuspidal automorphic representation $\pi=\otimes_{p\leq\infty}\pi_p$ of $\GL(d,\A_\Q)$, as in Definition \ref{def:automorphic}.  Let
 \begin{equation}\label{def:automorphic-aritheq1}
  L(s,\pi_p)=\frac1{(1-\alpha_{p,1}p^{-s})\cdot\ldots\cdots(1-\alpha_{p,d}p^{-s})}
 \end{equation}
 be the Euler factor at a prime $p$. We say that $L(s,\pi)$ is \term{of arithmetic type} if there exists a number field $E$ such that either
 \begin{equation}\label{def:automorphic-aritheq2}
  \alpha_{p,1},\ldots,\alpha_{p,d}\in E\qquad\text{for almost all good primes $p$}
 \end{equation}
 or
 \begin{equation}\label{def:automorphic-aritheq3}
  \alpha_{p,1}\sqrt{p},\ldots,\alpha_{p,d}\sqrt{p}\in E\qquad\text{for almost all good primes $p$}.
 \end{equation}
\end{defn}

\begin{rmk}
Let $\pi$ be as in Definition \ref{def:automorphic-arith} and suppose $d$ is odd. 
Then \eqref{def:automorphic-aritheq3} cannot occur.  
 Indeed, if \eqref{def:automorphic-aritheq3} would hold, then we would also have
\begin{equation}\label{def:automorphic-aritheq4}
  \alpha_{p,1}\cdot\ldots\cdot\alpha_{p,d}\sqrt{p}\in E\qquad\text{for almost all good primes $p$}.
\end{equation}
But the numbers $\alpha_{p,1}\cdot\ldots\cdot\alpha_{p,d}$ are the Satake parameters of the central character $\omega_\pi$ of $\pi$, which, up to a unitary twist, corresponds to a Dirichlet character $\chi$; see \eqref{eqn:omegap}. Hence we would have
\begin{equation}\label{def:automorphic-aritheq5}
  \chi(p)p^{it}\sqrt{p} = \chi(p)p^{it + \frac12}\in E\qquad\text{for almost all good primes $p$}
\end{equation}
for some real number $t$, so in particular
\begin{equation}\label{def:automorphic-aritheq6}
p^{it + \frac12}\in E\qquad\text{for infinitely many $p$. }
\end{equation} 
This is impossible for $t\in \R$ because of the following consequence of
the Six Exponentials Theorem~\cite{MR972013}.
\end{rmk}

\begin{lem} If $\alpha\in \C$ and $E/\Q$ is a number field with 
$p^\alpha \in E$ for infinitely many primes~$p$, then $\alpha\in \Z$.
\end{lem}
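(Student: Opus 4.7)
The plan is to argue by contradiction, assuming $\alpha \notin \Z$, and split into two cases depending on whether $\alpha$ is rational. The integer case is the content of the lemma, and the two non-integer regimes require very different arguments: elementary algebraic number theory handles the rational case, while the irrational case is where genuine transcendence input is needed.

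\textbf{Case 1: $\alpha \in \Q \setminus \Z$.} Write $\alpha = a/b$ in lowest terms with $b \geq 2$. For each prime $p$ in the given infinite set, let $K_p := \Q(p^{a/b}) \subseteq E$. A B\'ezout identity $ua + vb = 1$ gives $p^{1/b} = (p^{a/b})^u p^v \in K_p$, so $K_p = \Q(p^{1/b})$; applying Eisenstein's criterion to $x^b - p$ shows $[K_p:\Q] = b$ and that $p$ is totally ramified in $K_p$. For primes $p > b$ in our infinite set, the ramified primes of $K_p$ lie in $\{p\} \cup \{q : q \mid b\}$, so $K_{p_1} \neq K_{p_2}$ for distinct such $p_1, p_2$. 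Hence $E$ would contain infinitely many distinct subfields, contradicting the fact that a number field has only finitely many subfields (via the Galois correspondence applied to its Galois closure).

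\textbf{Case 2: $\alpha \notin \Q$.} Pick three distinct primes $p_1, p_2, p_3$ from the given infinite set and set $x_i = \log p_i$ for $i=1,2,3$ and $y_1 = 1$, $y_2 = \alpha$. The $x_i$ are linearly independent over $\Q$: any relation $r_1 \log p_1 + r_2 \log p_2 + r_3 \log p_3 = 0$ with $r_i \in \Q$ becomes, after clearing denominators, an integer relation $p_1^{n_1} p_2^{n_2} p_3^{n_3} = 1$ forcing $n_1 = n_2 = n_3 = 0$ by unique factorization. The $y_j$ are linearly independent over $\Q$ precisely because $\alpha \notin \Q$. The Six Exponentials Theorem then asserts that at least one of the six numbers
\[
 \exp(x_i y_j) \;\in\; \{p_1,\,p_2,\,p_3,\,p_1^\alpha,\,p_2^\alpha,\,p_3^\alpha\}
\]
must be transcendental. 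But $p_i \in \Q$ and $p_i^\alpha \in E$ by hypothesis, so all six numbers are algebraic---a contradiction.

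The main obstacle is Case 2, which is the genuine content of the lemma: one must correctly assemble the hypotheses of the Six Exponentials Theorem from the data, and in particular verify the two linear independence statements. Case 1 is routine, though it must be included because Six Exponentials does not apply when $1$ and $\alpha$ are $\Q$-dependent. One could alternatively seek a unified approach via a stronger transcendence result, but the case split presented here is the most direct route and keeps the role of the Six Exponentials Theorem transparent.
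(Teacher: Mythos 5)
Your proof is correct and follows essentially the same route as the paper: dispose of the rational non-integer case using the finiteness of $E$ over $\Q$, then apply the Six Exponentials Theorem to three distinct primes from the given set, using the $\Q$-linear independence of $\{\log p_1,\log p_2,\log p_3\}$ and of $\{1,\alpha\}$. Your Case 1 is spelled out in more detail than the paper's one-line remark, but the underlying argument is the same.
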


\begin{proof}  Suppose $\alpha \not\in \Z$. We cannot have $\alpha\in \Q$
because $E$ is a finite extension of $\Q$.
Thus, $\{1, \alpha\}$ is linearly independent over the rationals. 
(We also cannot have $\alpha$ algebraic, because by the Gelfond-Schneider
theorem~$p^\alpha$ would be transcedental. This does not seem to be
needed in the proof.)

Suppose $p_1$, $p_2$, and $p_3$ are distinct primes with $p_j^\alpha \in E$.
Since $\{\log p_1, \log p_2, \log p_3\}$ is linearly independent over the
rationals we have a contradiction because by the Six Exponentials Theorem~\cite{MR972013},
one of $p_1$, $p_2$, $p_3$, $p_1^\alpha$, $p_2^\alpha$, and $p_3^\alpha$ must
be transcendental.
\end{proof}

As a consequence of Lemma \ref{LCarithlemma} and the above remark,
we see that $L(s,\pi)$ is of arithmetic type if and only if $\pi$ is either $C$-arithmetic or $L$-arithmetic (see Section~\ref{sec:connections}).
So the conjectures of Clozel and Buzzard-Gee yield:

\begin{conj}\label{conj:Q1=Q2}
 A $\Q$-automorphic $L$-function is of algebraic type if and only if it is of arithmetic type.
\end{conj}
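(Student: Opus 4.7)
The plan is to factor the desired equivalence through the four notions of $C$-algebraic, $L$-algebraic, $C$-arithmetic, and $L$-arithmetic introduced by Buzzard--Gee. First I would assemble the three links already established earlier in this section: (i) by Remark \ref{rem:Q1cupQ2}, assuming the Ramanujan conjecture so that every $\pi_p$ is tempered, $L(s,\pi)$ is of algebraic type if and only if $\pi$ is either $C$-algebraic or $L$-algebraic; (ii) by the discussion preceding the conjecture, built on Lemma \ref{LCarithlemma} together with the Six Exponentials argument that rules out \eqref{def:automorphic-aritheq3} in odd degree, $L(s,\pi)$ is of arithmetic type if and only if $\pi$ is either $C$-arithmetic or $L$-arithmetic; (iii) for $\GL(d)$, the twist $\pi\mapsto\pi|\cdot|^{(d-1)/2}$ interchanges $C$-algebraic with $L$-algebraic and $C$-arithmetic with $L$-arithmetic. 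With (iii) in hand, the four-fold disjunction collapses and only a single equivalence remains to be checked.

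Next I would invoke the conjectures of Clozel \cite{Clozel1990} and Buzzard--Gee \cite{BuzzardGee2014}, which assert that a unitary cuspidal automorphic representation of $\GL(d,\A_\Q)$ is $L$-algebraic if and only if it is $L$-arithmetic (equivalently, $C$-algebraic if and only if $C$-arithmetic). Combining this input with (i), (ii) and (iii) immediately produces the stated biconditional. One small bookkeeping step along the way is to check that the dichotomies in Definition \ref{defn:Lalgebraictype} (shifts in $\Z$ versus $\tfrac12+\Z$) and in Definition \ref{def:automorphic-arith} (Satake parameters in $E$ versus in $\sqrt{p}\cdot E$) match up consistently under the twist; this is routine once one traces through Lemma \ref{LCalglemma} and Lemma \ref{LCarithlemma} in parallel.

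The main obstacle is the Clozel/Buzzard--Gee equivalence itself, which is a deep open problem. The direction ``$L$-algebraic implies $L$-arithmetic'' is conjecturally controlled by the existence of a compatible system of $\ell$-adic Galois representations, or a motive, attached to $\pi$, and is known only in special cases (CM forms, regular essentially self-dual representations over totally real or CM fields via Shimura-variety methods, and a handful of scattered instances). The reverse implication, that arithmeticity of the Satake parameters at almost all primes forces integrality of the archimedean Langlands parameter, is at least as delicate and is known in essentially no generality. Thus the proposal is modular: the combinatorial glue supplied here is entirely routine, and granting the Clozel/Buzzard--Gee conjectures the assertion follows with no genuinely new analytic or representation-theoretic input beyond the material already developed in this section.
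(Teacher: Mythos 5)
Your proposal matches the paper's own justification: both sides reduce the $L$-function conditions to the Buzzard--Gee notions (algebraic type $\Leftrightarrow$ $C$- or $L$-algebraic via Remark~\ref{rem:Q1cupQ2}, arithmetic type $\Leftrightarrow$ $C$- or $L$-arithmetic via Lemma~\ref{LCarithlemma} and the Six Exponentials remark), and the conjecture is then exactly the Clozel/Buzzard--Gee equivalence of algebraic and arithmetic. The only cosmetic difference is that you collapse the disjunctions via the twist $\pi\mapsto\pi|\cdot|^{(d-1)/2}$, whereas the paper simply invokes the two Buzzard--Gee equivalences separately; both are fine.
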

A more general conjecture for automorphic representations $\pi$ of \linebreak $G(\A_F)$ was made
by Buzzard and Gee in \cite{BuzzardGee2014}.
Namely, $\pi$ is $L$-algebraic if and only if it is $L$-arithmetic, and $\pi$ is $C$-algebraic if and only if it is $C$-arithmetic. The surprising fact about these conjectures is that a condition purely in terms of archimedean $L$-parameters is conjecturally equivalent to a condition purely in terms of non-archimedean $L$-parameters.

For isobaric automorphic representations of $\GL(n,\A_F)$, the conjecture that $C$-algebraic implies $C$-arithmetic is also a consequence of the more general conjectures made in \cite{Clozel1990, MR3642468}; see \textsection~8.1 of \cite{BuzzardGee2014}.

\subsection{\texorpdfstring{$L$}{}-functions of motives}\label{ssec:motives}
Let $F$ be a number field. The category of motives over $F$ has been constructed by Grothendieck in the 1960s. The first article about motives seems to be \cite{Demazure1969}. More contemporary and useful surveys are \cite{Kleiman1994}, \cite{Scholl1994} and \cite{Milne2013}. We will not recall here the construction of the category of motives. What is important for us is that given a \emph{pure motive $M$ of weight $w$}, there is an $L$-function $L(M,s)$ attached to it, which, after completing it to a function $\Lambda(M,s)$ using appropriate $\Gamma$-factors, conjecturally satisfies a functional equation $\Lambda(M,s)=\pm\bar\Lambda(M,1+w-s)$. We write $\Lambda(M,s)$ and not $\Lambda(s,M)$, because the functional equation relates $s$ and $1+w-s$, and not $s$ and $1-s$.

We briefly recall the shape of $\Lambda(M,s)$, using \cite{Serre1969} as our reference. We assume that the ground field is $\Q$ for simplicity. For each non-archimedean place $p$ the characteristic polynomial $P_{w,p}$ of the action of Frobenius on the inertia-fixed points of the $w$-th \'{e}tale cohomology group (at least conjecturally) has coefficients in $\Z$. We set
\begin{equation}\label{motLeq3}
 L_p(M,s)=\frac1{P_{w,p}(p^{-s})},
\end{equation}
and $L(M,s)=\prod_{p<\infty}L_p(M,s)$. There exists a positive integer $d$, called the \term{rank} of $M$, such that for almost all places the polynomial $P_{w,p}$ will have degree $d$. For all ``good'' places $p$, if we factor
\begin{equation}\label{motLeq4}
 P_{w,p}(T)=\prod_{j=1}^{b_w}(1-\alpha_{j,p}T),\qquad\alpha_{j,p}\in\C^\times,
\end{equation}
then it is conjectured that
\begin{equation}\label{motLeq5}
 |\alpha_{j,p}|=p^{w/2}
\end{equation}
(see $C_7$ in \textsection~2.3 of \cite{Serre1969}). At the archimedean place $\infty$ we have
\begin{equation}\label{motLeq1}
 L_\infty(M,s)=\Gamma_\R\Big(s-\frac w2\Big)^{h^{w/2,+}}\Gamma_\R\Big(s-\frac w2+1\Big)^{h^{w/2,-}}\prod_{\substack{p+q=w\\p<q}}\Gamma_\C(s-p)^{h^{p,q}},
\end{equation}
with the first two factors only appearing if $w$ is even. Here, the $h^{p,q}$ are the dimensions of the spaces $H^{p,q}$ in the Hodge decomposition of the Betti realization of $M$. If $w$ is even, then there is a space $H^{p,p}$ ($p=w/2$), on which complex conjugation acts as an involution; the number $h^{p,\pm}$ is the dimension of the $\pm1$-eigenspace.

Conjecturally, there exists a positive integer $N$ such that
\begin{equation}\label{motLeq2a}
 \Lambda(M,s)=N^{(s-w/2)/2}L_\infty(M,s)L(M,s)
\end{equation}
extends to an entire function on all of $\C$, bounded in vertical strips, and satisfies the functional equation $\Lambda(M,s)=\pm\bar\Lambda(M,1+w-s)$.

Replacing $s$ by $s+w/2$, we obtain the \emph{analytically normalized} functions $L(s,M):=L(M,s+w/2)$ and $\Lambda(s,M):=\Lambda(M,s+w/2)$. The functional equation becomes $\Lambda(s,M)=\pm\bar\Lambda(1-s,M)$. The factor \eqref{motLeq1} turns into
\begin{equation}\label{motLeq2}
 L_\infty(M,s+w/2)=\Gamma_\R(s)^{h^{w/2,+}}\Gamma_\R(s+1)^{h^{w/2,-}}\prod_{\substack{p+q=w\\p<q}}\Gamma_\C\Big(s+\frac{q-p}2\Big)^{h^{p,q}}.
\end{equation}
By \eqref{motLeq5}, the roots of the denominator polynomials of $L(s,M)$ will have absolute value $1$.

As described in \textsection~4.3 of \cite{Clozel1990}:
\begin{conj}\label{motautconj}
 There exists a one-to-one correspondence between irreducible, pure motives $M$ over $\Q$ of rank $d$, and $C$-algebraic cuspidal automorphic representations $\pi$ of $\GL(d,\A_\Q)$, such that $L(s,M)=L(s,\pi)$. 
\end{conj}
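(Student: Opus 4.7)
The plan is to attack the two directions of this correspondence separately, recognising that each is a major open problem in the Langlands programme: the motive-to-automorphic direction is essentially Langlands global reciprocity for $\GL(d)$ over $\Q$, while the automorphic-to-motive direction combines the Fontaine--Mazur conjecture with the existence of motives realising geometric $\ell$-adic representations. A complete proof is not available with current technology; what follows is the strategy one would follow, and the places where a genuine new input is needed.

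For the motive-to-automorphic direction, I would first check that $L(s,M)$ lies in a class where an automorphic partner can reasonably be sought. Given an irreducible pure motive $M$ of rank $d$ and weight $w$, analytic normalisation brings the functional equation to the symmetric form $\Lambda(s,M)=\pm\bar\Lambda(1-s,M)$, and the archimedean factor \eqref{motLeq2} is a product of $\Gamma_\R(s)^{h^{w/2,+}}\Gamma_\R(s+1)^{h^{w/2,-}}$ and $\Gamma_\C(s+(q-p)/2)^{h^{p,q}}$. Hence the spectral parameters satisfy $\mu_j\in\{0,1\}$ and $\nu_k\in\frac12\Z_{>0}$, which matches the $C$-algebraic condition of Lemma~\ref{LCalglemma}. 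Together with the Deligne-type bound $|\alpha_{j,p}|=1$ after normalisation (condition $C_7$ of \cite{Serre1969}), this shows that $L(s,M)$ is at least a candidate for a primitive tempered analytic $L$-function of algebraic type. The remaining and genuinely open step is to produce the automorphic $\pi$: one passes from $M$ to its compatible family of $\ell$-adic cohomological realisations and invokes Langlands reciprocity for $\GL(d,\A_\Q)$ to obtain a cuspidal $\pi$ whose Satake parameters match the Frobenius traces of $M$.

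For the automorphic-to-motive direction, I would first attach to a $C$-algebraic cuspidal $\pi$ a compatible system of geometric $\ell$-adic Galois representations $\rho_{\pi,\ell}\colon\Gal(\bar{\Q}/\Q)\to\GL(d,\bar{\Q}_\ell)$, drawing on the existing constructions (regular algebraic self-dual cases via unitary Shimura varieties; Harris--Lan--Taylor--Thorne in the regular algebraic non-self-dual case; Deligne for classical modular forms; and so on). I would then invoke a strong form of the Fontaine--Mazur conjecture to assert that each $\rho_{\pi,\ell}$ cuts out a direct summand of the $\ell$-adic cohomology of some smooth projective variety, and appeal to the Tate and standard motivic conjectures to extract from this geometric datum a pure motive $M$ realising the compatible system. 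Uniqueness on both sides would follow from strong multiplicity one for $\GL(d)$, already exploited in Lemma~\ref{Qautoprimitivelemma}: two cuspidal representations agreeing at almost all unramified places coincide, so the map from motives to cuspidal automorphic representations is injective, while uniqueness of $M$ comes from the Tate conjecture.

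The hard part is that essentially every step after the archimedean bookkeeping lies beyond present techniques: one needs Langlands global reciprocity for $\GL(d)$ in the $C$-algebraic case together with the existence of motives for geometric Galois representations. In practice, any honest proof today would be a patchwork of known cases --- rank one via class field theory, elliptic curves and weight-two newforms via modularity, Hilbert modular and certain unitary Shimura variety cases, potential automorphy in regular self-dual settings --- rather than a single uniform argument. I would therefore organise the proposal around these known cases, make explicit in each the reduction to the two core open conjectures above, and note that a complete proof of Conjecture~\ref{motautconj} would require a decisive advance in the Langlands programme.
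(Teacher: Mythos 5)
This statement is a conjecture, not a theorem: the paper offers no proof of it, merely attributing it to \textsection~4.3 of Clozel's work and then using it as one of the conjectural arrows (labelled ``C'') in Figure~\ref{fig:diagram2} and Table~\ref{tbl:table2}. Your proposal correctly recognises that no proof exists and instead gives an accurate inventory of the open inputs required --- Langlands reciprocity for $\GL(d)$ in one direction, the construction of Galois representations plus Fontaine--Mazur and the standard motivic conjectures in the other, with strong multiplicity one handling uniqueness --- which is entirely consistent with how the paper treats the statement. So there is no gap relative to the paper; neither you nor the paper proves anything here, and your roadmap is the standard and correct one.

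One small point of bookkeeping: you write that the spectral parameters of $L(s,M)$ satisfy $\mu_j\in\{0,1\}$ and $\nu_k\in\frac12\Z_{>0}$ simultaneously. From \eqref{motLeq2} the $\nu_k$ equal $(q-p)/2$ with $p+q=w$, so they lie in $\frac12+\Z_{\geq0}$ when $w$ is odd (in which case no $\Gamma_\R$ factors occur) and in $\Z_{>0}$ when $w$ is even (in which case $\mu_j\in\{0,1\}$ may occur). This dichotomy is exactly what Definition~\ref{defn:Lalgebraictype} and Lemma~\ref{LCalglemma} encode, and is worth stating precisely since the $C$-algebraic condition for even $d$ forces $d_1=0$.
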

The conjecture implies that the class of analytically normalized $L$-functions arising from irreducible, pure motives is the same as the class of $\Q$-automorphic $L$-functions of algebraic type. 
The conjecture that $L(s,M)$ should satisfy the required analytic properties
shared by the class of analytic $L$-functions as defined in Section~\ref{sec:defn_analytic} is known as the \term{Hasse-Weil conjecture}.

\begin{rmk} Given a $\Gamma$-factor in the analytic normalization \eqref{motLeq2},
it is not possible to determine the weight of the underlying motive.
Indeed, the motives which (conjecturally) are attached to the $L$-function
form an equivalence class, where the members
are Tate twists of each other.
It is natural to choose a twist so that non-vanishing Hodge numbers
of that motive are among
$(h^{w,0},...,h^{0,w})$, with
$h^{w,0}=h^{0,w}>0$.  
The weight, $w$, of that motive will equal the algebraic weight of the
$L$-function, which explains our choice of terminology.
\end{rmk}

\subsection{\texorpdfstring{$L$}{}-functions of Galois representations}\label{ssec:Galois}
Let ${\rm Gal}(\bar\Q/\Q)$ be the absolute Galois group of $\Q$. Let $L$ be a finite extension of $\Q_\ell$ for some prime $\ell$.  A continuous homomorphism $\rho:{\rm Gal}(\bar\Q/\Q)\to\GL(d,L)$ will be referred to as a \term{Galois representation}. See \cite{Serre1968} for basic facts. In \cite{FontaineMazur1995} the class of \term{geometric} Galois representations was defined. As summarized in Taylor \cite{Taylor}, Galois representations arising from motives are geometric. Conversely, the Fontaine-Mazur conjecture asserts that any geometric Galois representation is motivic. Hence, the class of (analytically normalized) $L$-functions arising from geometric Galois representations should be the same as the class of (analytically normalized) $L$-functions attached to Galois representations. Assuming Conjecture \ref{motautconj}, this is also the same as the class of $\Q$-automorphic $L$-functions of algebraic type.
This explains the triangle in the upper left corner of Figure~\ref{fig:diagram2}.

Conjecture 5.16 of \cite{BuzzardGee2014}, for $G=\GL(d)$ over $\Q$, makes the statement that a geometric Galois representation is attached to an $L$-algebraic automorphic representation $\pi$ of $\GL(d,\A_\Q)$. For recent progress on this conjecture, see \cite{Shin2011} and \cite{HarrisLanTaylorThorne2016}. That every Galois representation arises from an automorphic representation is known as the Modularity Conjecture.

\section{Connections}\label{sec:connections}

The equalities between sets of $L$-functions in Figure~\ref{fig:diagram1} are a consequence of the 12 relations shown in Figure~\ref{fig:diagram2}, where an arrow means ``can be viewed as a natural subset of.''  Most of those arrows are at least partially conjectural.  More detailed explanations of these arrows can be found in Tables~\ref{tbl:table1} and \ref{tbl:table2}.
\begin{table}
\def\arraystretch{1.5}
\begin{tabular}{lll}
Connection & Label & Justification\\\hline
$Q \subset A$ & J-PS-S & \begin{minipage}[t]{.5\columnwidth}A result generally due to Cogdell--Piatetski-Shapiro and Jacquet--Piatetski-Shapiro--Shalika \cite{JacquetPSShalika1981,Cogdell04} as formulated in Proposition~\ref{prop:automorphicanalytic}.\end{minipage}\\
$A\subset Q$ & S & \begin{minipage}[t]{.5\columnwidth}Selberg \cite{sel} identified a class of axioms for $L$-functions and implicitly conjectured that this class is contained in the class associated to automorphic representations as defined by Langlands \cite{Langlands1980}.  $A$ and $Q$ are, respectively, subsets of these classes with the same formal properties.\end{minipage}\\
$Q_*\subset Q$ & &  \begin{minipage}[t]{.5\columnwidth}Restriction to a subset.\end{minipage}\\
$Q_1\cup Q_2 = Q_*$ & & \begin{minipage}[t]{.5\columnwidth} Remark~\ref{rem:Q1cupQ2}.\end{minipage}\\
$Q_1=Q_2$ & B-G &  \begin{minipage}[t]{.5\columnwidth}A conjecture due to Buzzard--Gee \cite{BuzzardGee2014} as formulated in Conjecture~\ref{conj:Q1=Q2}.\end{minipage}\\
$A_*\subset A$ & & \begin{minipage}[t]{.5\columnwidth}Restriction to a subset.\end{minipage}\\
$Q_1 = A_1$ & & \begin{minipage}[t]{.5\columnwidth}Formal if we assume $A=Q$.\end{minipage}\\
$Q_2 \subset A_2$ & & \begin{minipage}[t]{.5\columnwidth} A result of Clozel \cite{Clozel1990} implies that the (suitably) re-scaled coefficients are integers and the inclusion otherwise is formal.\end{minipage}\\
$A_2\subset Q_2$ &&\begin{minipage}[t]{.5\columnwidth}Formal.\end{minipage}\\
$A_1=A_2$ & & \begin{minipage}[t]{.5\columnwidth} Piecing together previous connections.\end{minipage}\\
\begin{minipage}[t]{.3\columnwidth}
$A_*=A_1=A_2\\ \phantom{A_*}=Q_1=Q_2=Q_*$\end{minipage}&&\begin{minipage}[t]{.5\columnwidth}Piecing together previous connections.\end{minipage}\\
&&\\
\end{tabular}
\caption{Explanations of the arrows in Figure~\ref{fig:diagram2} between analytic $L$-functions and $\Q$-automorphic $L$-functions.  These explanations and these arrows correspond to Section~\ref{sec:Q-auto} and Sections~\ref{sec:analytic-of-alg-type}--\ref{ssec:arithmetic}. If the justification for a connection is ``formal'', this means that it is an immediate corollary of the axioms or properties satisfied by the sets of $L$-functions being connected.}
\label{tbl:table1}
\end{table}

\begin{table}
\def\arraystretch{1.5}
\begin{tabular}{lll}
Connection & Label & Justification\\\hline
$M\subset A_*$ & H-W & \begin{minipage}[t]{.5\columnwidth}In our notation this is a restatement of the Hasse--Weil conjecture.\end{minipage}\\
$M=Q_*$ & C & \begin{minipage}[t]{.5\columnwidth}Conjecture due to Clozel\cite{Clozel1990, MR3642468}.\end{minipage}\\
$M\subset G$ & T & \begin{minipage}[t]{.5\columnwidth}Taylor \cite[pp, 77, 79-80]{Taylor} distills the work of many people and describes how to attach a Galois representation to a motive.\end{minipage}\\
$G\subset M$ & F-M & \begin{minipage}[t]{.5\columnwidth}Fontaine-Mazur Conjecture \cite{FontaineMazur1995}.\end{minipage}\\
$G\subset Q_*$ & Modularity & \begin{minipage}[t]{.5\columnwidth}The general conjecture that asserts to a Galois representation one can attach an automorphic representation so that the two $L$-functions agree.\end{minipage}\\
$Q_*\subset G$ & B-G & \begin{minipage}[t]{.5\columnwidth}Conjecture due to Buzzard--Gee \cite{BuzzardGee2014}.\end{minipage}\\
$M=A_*=Q_*=G\!$ &&\begin{minipage}[t]{.5\columnwidth}Piecing together previous connections.\end{minipage}
\end{tabular}
\caption{Explanations of arrows in Figure~\ref{fig:diagram2} between various sources of $L$-functions as described in Sections~\ref{ssec:motives} and \ref{ssec:Galois}.}\label{tbl:table2}
\end{table}











\begin{landscape}
\usetikzlibrary{arrows,
                calc,chains,
                decorations.pathreplacing,decorations.pathmorphing,
                fit,
                positioning,}


\vspace{5in}

\begin{figure}\centering
\scalebox{.65}{
\begin{tikzpicture}[node distance = 8mm, start chain = A going below, font = \sffamily, > = stealth', PC/.style = {
        rectangle, rounded corners,
        draw=black, very thick,
        text width=15em,
        minimum height=3em,
        align=center,
        on chain}, 
blank/.style={text width=10em,
        minimum height=3em,
        align=center, on chain},
                    ]

\node[PC]   {$\mathbb{Q}$-automorphic $L$-functions of algebraic type}; 
\node[PC]   {$\mathbb{Q}$-automorphic $L$-functions of arithmetic type}; 
\node[blank] {};
\node[PC]   {Analytic $L$-functions of algebraic type};
\node[PC]   {Analytic $L$-functions of arithmetic type};
%

%
\node[PC, inner xsep=3mm, inner ysep=6mm, yshift=9mm,
      fit=(A-1) (A-2)] {};

\node[PC, inner xsep=3mm, inner ysep=6mm, yshift=9mm,
      fit=(A-4) (A-5)] {};

\path   let \p1 = (A-6.north),
            \p2 = (A-6.south),
            \n1 = {veclen(\y2-\y1,\x2-\x1)} in
node[PC, right=of A-6.east,xshift=10mm,minimum height=\n1]  {$\mathbb{Q}$-automorphic:\\ $L$-functions of tempered balanced unitary cuspidal automorphic representations of $\textrm{GL}(d,\mathbb{A}_{\mathbb{Q}})$};

\path   let \p1 = (A-7.north),
            \p2 = (A-7.south),
            \n1 = {veclen(\y2-\y1,\x2-\x1)} in
node[PC, right=of A-7.east,xshift=10mm,minimum height=\n1]  {Analytic:\\ tempered balanced primitive entire analytic $L$-functions };

\path   let \p1 = (A-6.north),
            \p2 = (A-6.south),
            \n1 = {veclen(\y2-\y1,\x2-\x1)} in
node[PC, left=of A-6.west,xshift=-15mm,minimum height=\n1]  {$L(s,\rho)$ for $\rho$ an irreducible geometric Galois representation};

\path   let \p1 = (A-7.north),
            \p2 = (A-7.south),
            \n1 = {veclen(\y2-\y1,\x2-\x1)} in
node[PC, left=of A-7.west,xshift=-15mm,minimum height=\n1]  {$L(s,M)$ for $M$ an irreducible pure motive};

\node[above left] at (A-1.south east) {\tiny$\mathbf{Q}_1$};
\node[above left] at (A-2.south east) {\tiny$\mathbf{Q}_2$};
\node[above left] at (A-6.south east) {\tiny$\mathbf{Q}_\star$};

\node[above left] at (A-4.south east) {\tiny$\mathbf{A}_1$};
\node[above left] at (A-5.south east) {\tiny$\mathbf{A}_2$};
\node[above left] at (A-7.south east) {\tiny$\mathbf{A}_\star$};

\node[above left] at (A-8.south east) {\tiny$\mathbf{Q}$};
\node[above left] at (A-9.south east) {\tiny$\mathbf{A}$};

\node[above left] at (A-10.south east) {\tiny$\mathbf{G}$};
\node[above left] at (A-11.south east) {\tiny$\mathbf{M}$};




\draw[black,very thick,->, transform canvas={xshift=-10mm}]
    (A-10) edge node[xshift=-5mm] {F-M} (A-11) ;

\draw[black,very thick,->, transform canvas={xshift=10mm}]
    (A-11) edge node[xshift=2.5mm] {T} (A-10) ;

\draw[black,very thick,->, transform canvas={xshift=0mm}]
    (A-1) edge node[xshift=-5mm] {B-G}(A-2);  

\draw[black,very thick,->, transform canvas={xshift=0mm}]
(A-2) edge (A-1);

\path[black,very thick, <->,shorten <=-1mm,shorten >=-1mm]
  (A-11.north east) edge node[auto] {C} (A-6.south west);

\draw[black,very thick, ->, transform canvas={xshift=0mm}]
  (A-11) edge node[auto] {H-W} (A-7);

\draw[black,very thick, ->, transform canvas={yshift=7mm}]
  (A-10) edge node[auto] {Modularity} (A-6);


\draw[black,very thick, ->, transform canvas={yshift=-7mm}]
  (A-6) edge node[auto] {B-G} (A-10);

\path[black,very thick, ->,transform canvas={yshift=0mm}]
   ($(A-2.west)+(0,0.25)$) edge[out=225,in=135]  ($(A-5.west)+(0,-0.25)$) ;

\path[black,very thick,->, transform canvas={yshift=0mm}]
    ($(A-5.west)+(0,0.25)$) edge[out=135,in=225]  ($(A-2.west)+(0,-0.25)$) ;

\path[black,very thick,<->, transform canvas={xshift=0mm}]
    (A-1.east) edge[out=-45,in=45]  (A-4.east) ;


\draw[black,very thick,right hook->, transform canvas={yshift=0mm}]
    (A-6) edge (A-8) ;

\draw[black,very thick,right hook->, transform canvas={yshift=0mm}]
    (A-7) edge (A-9) ;

\draw[black,very thick,->, transform canvas={xshift=10mm}]
    (A-9) edge node[xshift=3mm] {S} (A-8) ;

\draw[black,very thick,->, transform canvas={xshift=-10mm}]
    (A-8) edge node[xshift=-7mm] {C-PS--J-PS-S}(A-9) ;

  \end{tikzpicture}
}
\caption{More detailed conjectured relationships between the sets of $L$-functions considered in this paper.
The relationships are labeled with an attribution to give an indication of the source of the conjecture.}\label{fig:diagram2}
\end{figure}
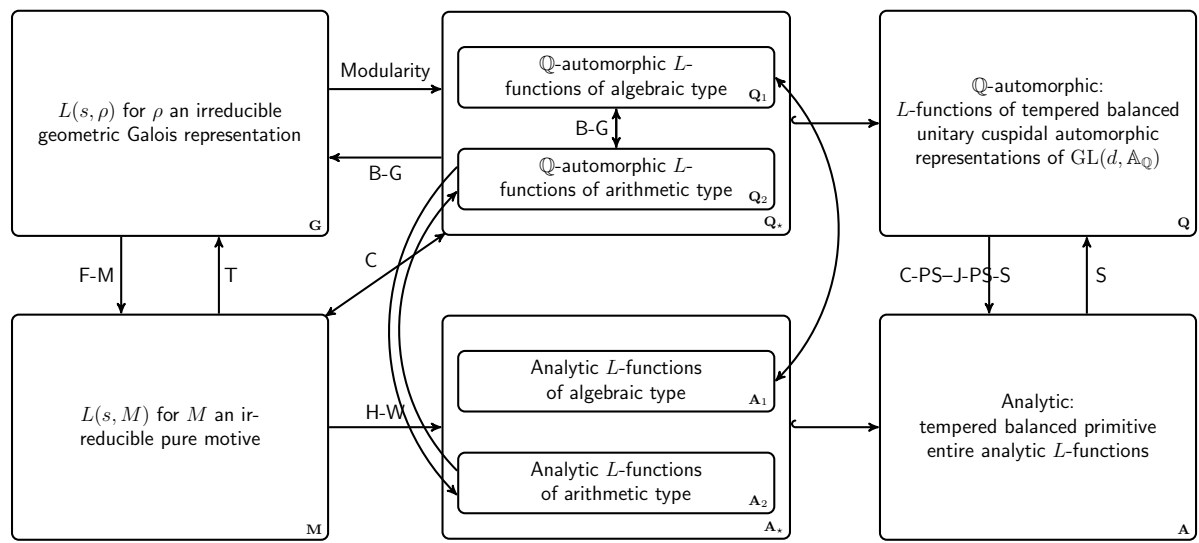

  \end{landscape}

\section{Appendix: non-tempered \texorpdfstring{$L$}{}-functions}

If an $L$-function fails to be tempered, the failure could either
occur in the $\Gamma$-factors or in the local factors of the
Euler product.  The failure must occur in a specific form,
which we call the \term{unitary pairing condition}.
Our motivation is that the unitary pairing condition holds
for the factors arising from generic unitary local representations.

\subsection{The unitary pairing condition at infinity}
In the definition we use the following notation: if $x\in\R$ and
$\xi\in\C$ then $(x,\xi)^*=(x,-\overline{\xi})$.
Also, we introduce a parameter $\theta < \frac12$ which 
measures how far the $L$-function is from being tempered at
infinity.

\begin{defn}
The multisets $\{\mu_j\}$ and $\{\nu_j\}$ meet the
\term{unitary pairing condition at infinity} if it is possible to
write $\mu_j=\delta_j+\alpha_j$ and $\nu_j=\eta_j+\beta_j$,
where $\delta_j\in \{0,1\}$ and $\eta_j\in\{\frac12,1,\frac32,\ldots\}$,
with $|\Re(\alpha_j)|, |\Re(\beta_j)| <\theta$,  such that the multisets
$\{(\delta_j,\alpha_j)\}$ and $\{(\eta_j,\beta_j)\}$ are closed under the operation
$S\to {S}^*$.
\end{defn}

For example, the following $\Gamma$-factor satisfies the unitary
pairing condition:
\begin{align}
&
\Gamma_\R(s-0.2)
\Gamma_\R(s+0.2)
\Gamma_\R(s)^3
\Gamma_\R(s+0.9)
\Gamma_\R(s+1.1)\cr
&\phantom{xxxxx}\times\Gamma_\C(s+0.7)
\Gamma_\C(s+1.3)^2
\Gamma_\C(s+1.7)
\Gamma_\C(s+7),
\end{align}
as does this one
\begin{align}
&
\Gamma_\R(s-0.2+3i)
\Gamma_\R(s+0.2+3i)
\Gamma_\R(s+1)
\Gamma_\R(s+1-8i) \cr
&\phantom{xxxxx}\times
\Gamma_\C(s+0.7)
\Gamma_\C(s+1.3)
\Gamma_\C(s+1.3-7i)
\Gamma_\C(s+1.7-7i).
\end{align}

\subsection{The unitary pairing condition at \texorpdfstring{$p$}{}}
Just as in the archimedean case, we introduce a parameter $\theta<\frac12$
which provides a weak version of the Ramanujan bound:
$|\alpha_{j,p}|\le p^\theta$.
At a good prime the unitary pairing condition is easy to state.

\begin{defn} Suppose $p$ is a good prime.
The multiset $\{\alpha_1,\ldots,\alpha_d\}$
meets the \term{unitary pairing condition at}~$p$
with
partial Ramanujan bound~$\theta<\frac12$,
if $|\alpha_j|\le p^\theta$ and the multiset is
closed under the operation $x\to 1/\overline{x}$.  Equivalently,
the polynomial $F(z)=\prod_j (1-\alpha_j z)$
has all its roots in $|z|\ge p^{-\theta}$ and  satisfies the
self-reciprocal condition
\begin{equation}\label{eqn:self-reciprocal}
F(z) = \xi z^d \overline{F}(z^{-1}),
\end{equation}
where $\xi=(-1)^d \prod_j \alpha_j$.
\end{defn}
The term \emph{self-reciprocal} refers to the fact that,
up to multiplication by a constant,
the coefficients of the polynomial are the same if read in either order.

If $|\alpha_j|=1$ then the unitary pairing condition at $p$ says nothing,
because $\alpha_j=1/\overline{\alpha_j}$.
But those Satake parameters which are not on the unit circle occur in pairs:
if $\alpha_j=r e^{i\theta}$ with $r\not=1$, then $r^{-1}e^{i\theta}$ is also a
Satake parameter.  Those two
points are located symmetrically with respect to the unit circle.

The general case of the unitary pairing condition at $p$, which includes
the good prime version above, closely follows the archimedean case.
Specifically, the $\Gamma_\R$ factors are like the good primes, and the
$\Gamma_\C$ factors are similar to the bad primes.

Recall the notation $(x,\xi)^*=(x,-\overline{\xi})$.

\begin{defn}
The multiset $\{\alpha_1,\ldots,\alpha_M\}$ meets the
\term{unitary pairing condition at}~$p$ of degree $d$
and  partial Ramanujan bound~$\theta<\frac12$,
if it is possible to
write $\alpha_j=p^{-\eta_j-\beta_j}$
where $\eta_j\in\{0,\frac12,1,\frac32,\ldots\}$,
with $\sum 2\eta_j \le d-M$ and
$|\Re(\beta_j)| \le \theta$,  such that the multiset
$S=\{(\eta_j,\beta_j)\}$ is closed under the operation
$S\to {S}^*$.
\end{defn}










\bibliography{analytic-l-fcns}
\bibliographystyle{amsplain}
\end{document}